\newcommand{\Z}{{\mathbb Z}}
\newcommand{\Q}{{\mathbb Q}}
\newcommand{\too}{\longrightarrow}
\newcommand{\tn}{\textnormal}
\newcommand{\Sel}{{\mathrm{Sel}}}
\newcommand{\dimF}{{\mathrm{dim}_{\mathbb{F}_2}}}
\newcommand{\Ft}{{\mathbb{F}_2}}
\newcommand{\ord}{{ \mathrm{ord} }}
\newcommand{\prob}{{ \mathrm{Prob} }}
\newcommand{\cO}{\mathcal{O}}
\newtheorem{thm}{\bf{Theorem}}
\newtheorem{theorem}{\bf{Theorem}}[section]
\newtheorem{prop}[theorem]{\bf{Proposition}}
\theoremstyle{definition}
\newtheorem{remark}[theorem]{\bf{Remark}}
\DeclareSymbolFont{cyrletters}{OT2}{wncyr}{m}{n}
\DeclareMathSymbol{\Sha}{\mathalpha}{cyrletters}{"58}
\title{The Elkies Curve Has Rank 28 Subject only to GRH}
\author{Zev Klagsbrun}
\address[Z.~Klagsbrun]{Center for Communications Research, 4320 Westerra Court, San Diego, CA 92121}
\email{zdklags@ccrwest.org}
\author{Travis Sherman}
\address[T.~Sherman]{}
\email{glaisher@hotmail.com}
\author{James Weigandt}
\address[J.~Weigandt]{Institute for Computational and Experimental Research in Mathematics, Brown University, 121 South Main Street, Providence, RI 02903}
\email{james\_weigandt@brown.edu}
\begin{document}

\begin{abstract}
In 2006, Elkies presented an elliptic curve with 28 independent rational points. We prove that subject to GRH, this curve has Mordell-Weil rank equal to 28 and analytic rank at most 28. We prove similar results for a previously unpublished curve of Elkies having rank 27.

We also prove that subject to GRH, certain specific elliptic curves have Mordell-Weil ranks $20$, $21$, $22$, $23$, and $24$. This complements the work of Jonathan Bober, who proved this claim subject to both the Birch and Swinnerton-Dyer rank conjecture and GRH. This gives some new evidence that the Birch and Swinnerton-Dyer rank conjecture holds for elliptic curves over $\Q$ of very high rank.

Our results about Mordell-Weil ranks are proven by computing the $2$-ranks of class groups of cubic fields associated to these elliptic curves. As a consequence, we also succeed in proving that, subject to GRH, the class group of a particular cubic field has $2$-rank equal to $22$ and that the class group of a particular totally real cubic field has $2$-rank equal to $20$.
\end{abstract}

\maketitle

\section{Introduction}

The celebrated Mordell-Weil theorem asserts that the group $E(\Q)$ of rational points on an elliptic curve defined over $\Q$ is finitely generated. Given a particular elliptic curve $E$ over $\Q$, one can eventually find a set of generators for $E(\Q)$, but there is no algorithm known unconditionally to certify that such a set spans $E(\Q)$, or even a finite index subgroup of $E(\Q)$. It also remains unknown whether the ranks of elliptic curves over $\Q$ are uniformly bounded.

As of this writing, the highest exhibited lower bound on the rank of an elliptic curve defined over $\Q$ is due to Noam Elkies \cite{E}, who found the elliptic curve $E_{28}$ given by
\begin{multline} \label{elkies}
\resizebox{0.9\hsize}{!}{$y^2 + xy + y =x^3 - x^2 - 20067762415575526585033208209338542750930230312178956502x$}\\
 \resizebox{0.9\hsize}{!}{$+  34481611795030556467032985690390720374855944359319180361266008296291939448732243429$}
\end{multline}
together with $28$ independent rational points on $E_{28}$ with infinite order. The main result of this paper is the following:

\begin{thm}\label{mainthm} Assume the Generalized Riemann Hypothesis (GRH).
\begin{enumerate}[(i)]
	\item \label{algebraic} The Mordell-Weil group $E_{28}(\Q)$ is isomorphic to $\Bbb Z^{28}$. 
	\item \label{analytic} The analytic rank of $E_{28}$ over $\Q$ is at most $28$.
\end{enumerate}
\end{thm}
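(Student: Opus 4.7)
The plan is to prove (i) by combining the known lower bound $\mathrm{rank}(E_{28}(\Q)) \geq 28$ (from Elkies's explicit 28 independent points) with an upper bound on $\dimF \Sel_2(E_{28}/\Q)$ obtained by 2-descent through a cubic field, and to prove (ii) by applying a smoothed explicit formula under GRH. For (i), since the Weierstrass equation has no obvious rational 2-torsion, I would first transform $E_{28}$ into the form $y^2 = f(x)$ with $f \in \Z[x]$ a monic cubic and verify that $f$ is irreducible, letting $K = \Q[\theta] = \Q[x]/f(x)$ be the associated cubic field. Classical 2-descent then realizes $\Sel_2(E_{28}/\Q)$ as the subgroup of $\KmodS$ cut out by the image of $P \mapsto x(P) - \theta$ together with local conditions at $2$, at infinity, and at primes of bad reduction. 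Standard arguments bound $\dimF \Sel_2(E_{28}/\Q)$ in terms of $\dimF \mathrm{Cl}(K)[2]$, the $\Ftwo$-rank of $\cO_K^\times/(\cO_K^\times)^2$, and a finite, explicitly computable sum of local contributions from primes dividing $2 \cdot \mathrm{disc}(f)$.

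The decisive step is therefore a rigorous determination of $\dimF \mathrm{Cl}(K)[2]$. Under GRH, Bach's bounds reduce the generators search to a computationally tractable region, which is what makes class group computation feasible for a cubic field whose discriminant is comparable to the discriminant of $E_{28}$, itself of astronomical size. Once $\dimF \mathrm{Cl}(K)[2]$ is pinned down, the Selmer bound yields $\dimF \Sel_2(E_{28}/\Q) \leq 28$; comparing with the identity
\[
\dimF \Sel_2(E_{28}/\Q) = \mathrm{rank}(E_{28}(\Q)) + \dimF E_{28}(\Q)[2] + \dimF \Sha(E_{28}/\Q)[2]
\]
and the rank-$28$ lower bound forces $\mathrm{rank}(E_{28}(\Q)) = 28$ and simultaneously $\Sha(E_{28}/\Q)[2] = 0$, establishing (\ref{algebraic}).

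For (ii), I would invoke an explicit-formula argument of Mestre--Stark type. Under GRH, for an even, non-negative Schwartz function $g$ with $g(0) \geq 1$ and $\hat g$ compactly supported,
\[
\ord_{s=1} L(E_{28}, s) \leq \sum_\gamma g(\gamma) = \hat g(0) \log N - 2 \sum_{p,\, k \geq 1} \frac{\log p}{p^{k/2}} a_{p^k} \hat g(k \log p) + A_\infty(g),
\]
where $\gamma$ ranges over imaginary parts of nontrivial zeros, $N$ is the conductor of $E_{28}$, and $A_\infty(g)$ packages the archimedean factors. Since $\hat g$ has compact support, the prime sum is finite and depends only on $a_p$ for $p$ below an explicit cutoff determined by the support of $\hat g$ and $\log N$. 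Choosing $g$ and its support carefully, one expects this bound to deliver analytic rank at most $28$.

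The main obstacle in both parts is computational rather than conceptual: the cubic field $K$ has a gigantic discriminant, so even under GRH the certification of $\dimF \mathrm{Cl}(K)[2]$ requires delicate algorithmic handling, and the explicit formula requires assembling $a_p$ for all primes up to a substantial cutoff controlled by $\log N$. Both tasks are in principle effective, and the novelty of the argument lies in pushing these computations through for curves of such extreme conductor.
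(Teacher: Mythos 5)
Your proposal follows essentially the same route as the paper. Part (i) is proved by combining Elkies's $28$ independent points with a $2$-descent bound of the shape $\dimF \Sel_2(E_{28}/\Q) \le \dimF Cl(K)[2] + (\text{unit and local terms})$ for the cubic field $K$ cut out by the $2$-division polynomial --- in the paper this is the Brumer--Kramer bound, giving $20 + 2 + 6 = 28$ once a GRH-conditional class group computation (via a number-field-sieve-style relation search justified by the Bach/Belabas bounds) pins down $\dimF Cl(K)[2] = 20$. Part (ii) is the explicit-formula method of Bober with a Fej\'er-kernel test function, which is exactly the Mestre-type argument you describe.

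The one concrete shortfall is in part (ii). For this curve, the explicit formula with a computationally feasible test function (the paper uses the Fej\'er kernel with $\Delta = 4$, already about $40$ hours of computation since it requires $a_p$ for all $p \le e^{8\pi}$) only yields $\sum_\gamma f_\Delta(\gamma) < 30$, not $< 29$. The paper then uses that the global root number of $E_{28}$ is $+1$, so $r_{\mathrm{an}}(E_{28})$ is even, to drop from ``at most $29$'' to ``at most $28$.'' Your write-up omits this parity step and simply hopes that a careful choice of $g$ lands the bound below $29$; for a conductor of this size that is not achievable in practice, so without the root-number argument part (ii) does not close.
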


%The result about the analytic rank is proved using a method of Bober \cite{BANTS} for bounding the analytic rank of an elliptic curve conditional on GRH for the Hasse-Weil $L$-function of $E_{28}$. Booker and Dwyer have proved the same result with a more sophisticated choice of test function, but this has not yet appeared in print \cite{BPC}. %Experience with Bober's code gave us the confidence to find that the family of test functions implemented by Bober was sufficient to conclude on GRH that the analytic rank of $E_{28}$ is at most $28$. 

The result about the analytic rank is proved using a method of Bober \cite{BANTS} for bounding the analytic rank of an elliptic curve conditional on GRH for the Hasse-Weil $L$-function of $E_{28}$. Booker and Dwyer proved the same result using slightly more sophisticated methods, but this result has not appeared in print (see Remark 1.2 in \cite{BANTS}).%\cite{BPC}. %Experience with Bober's code gave us the confidence to find that the family of test functions implemented by Bober was sufficient to conclude on GRH that the analytic rank of $E_{28}$ is at most $28$. 

In contrast, the result about Mordell-Weil rank is proved using the classical method of $2$-descent. We show that GRH implies the dimension of the $2$-Selmer group $\textnormal{Sel}_2(E_{28}/\Q)$ is exactly $28$. This is a consequence of the following result:
\begin{thm}\label{backup}
Let $K_{28}$ be the cubic subfield of the $2$-division field of $E_{28}$. Then:
\begin{enumerate}[(i)]
	\item \label{lower} The $2$-rank of the ideal class group $Cl(K_{28})$ is at least $20$.
	\item \label{grh_upper} If GRH holds, then the $2$-rank of $Cl(K_{28})$ is exactly $20$.
\end{enumerate}
\end{thm}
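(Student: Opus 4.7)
The plan is to translate the question about $Cl(K_{28})$ into one about the $2$-Selmer group of $E_{28}$ via classical $2$-descent on a curve whose $2$-division polynomial is irreducible. Letting $\theta$ be a root of the cubic $f(x)$ defining $K_{28}$, the standard descent homomorphism $\delta \colon E_{28}(\Q)/2E_{28}(\Q) \hookrightarrow \KmodS$, $(x,y) \mapsto x - \theta$, has image contained in the $2$-Selmer group $\Sel_2(E_{28}/\Q)$, which is realized concretely as the subgroup of $\KmodS$ cut out by unramifiedness outside a finite set $S$ (containing the primes above $2$ and the bad primes of $E_{28}$), by a global norm-squareness condition $N_{K_{28}/\Q}(\alpha) \in (\Q^\times)^2$, and by the standard local image conditions at the places in $S$. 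Combining this with the Kummer-theoretic exact sequence
$$0 \to \cO_{K_{28},S}^\times/(\cO_{K_{28},S}^\times)^2 \to K_{28}(S,2) \to Cl(\cO_{K_{28},S})[2] \to 0$$
and the surjection $Cl(K_{28})[2] \twoheadrightarrow Cl(\cO_{K_{28},S})[2]$ gives a bound
$$\dimF \Sel_2(E_{28}/\Q) \leq \dimF Cl(K_{28})[2] + c,$$
where $c$ is an explicit invariant depending only on $S$, the signature of $K_{28}$, and the reduction types of $E_{28}$ at the primes in $S$.

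For part (\ref{lower}), the $28$ independent rational points of Elkies inject into $\Sel_2(E_{28}/\Q)$ under $\delta$, which immediately gives $\dimF \Sel_2(E_{28}/\Q) \geq 28$. One then computes $c$ for $E_{28}$ by unconditional local computations (Tamagawa numbers, ramification in $K_{28}/\Q$, and the archimedean/unit contribution), verifies $c = 8$, and concludes $\dimF Cl(K_{28})[2] \geq 20$.

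For part (\ref{grh_upper}), we compute $Cl(K_{28})$ directly under GRH. By Bach's theorem, assuming GRH, the class group of any number field $L$ is generated by the classes of prime ideals of norm at most $12(\log|\mathrm{disc}(L)|)^2$; enumerating such primes in $\cO_{K_{28}}$, producing relations among them via principal-ideal tests, and reducing the resulting relation matrix to Smith normal form determines $Cl(K_{28})$ and in particular its $2$-rank. The discriminant of $K_{28}$ is of the order of $|\mathrm{disc}(f)|$, which for Elkies's curve runs to well over a hundred decimal digits, so the Bach bound is sizeable and the principal-ideal tests and ensuing linear algebra over $\Z$ push the limits of current class-group software---this is the main obstacle. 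Once the certified computation returns $\dimF Cl(K_{28})[2] \leq 20$, combining with part (\ref{lower}) yields equality.
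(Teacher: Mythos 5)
Your part (\ref{lower}) is essentially the paper's argument: the paper simply cites Brumer--Kramer \cite{BK} for the inequality $\dimF \Sel_2(E_{28}/\Q) \leq \dimF Cl(K_{28})[2] + u(E_{28}) + n(E_{28})$, whereas you sketch a re-derivation of it through $K_{28}(S,2)$, the $S$-unit/$S$-class-group sequence, and the local conditions. Your constant $c=8$ agrees with the paper's $u(E_{28}) + n(E_{28}) = 2 + 6$ (discriminant positive; multiplicative primes $\{5,7,11,13\}$ with even valuation; additive reduction at $3$, which splits completely in $K_{28}$), and the injection of the $28$ independent points into $\Sel_2(E_{28}/\Q)$ is valid because $E_{28}(\Q)[2]=0$. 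One small imprecision: the map $Cl(K_{28})[2] \to Cl(\cO_{K_{28},S})[2]$ need not be surjective; what you actually need, and what does hold, is the inequality $\dimF Cl(\cO_{K_{28},S})[2] \le \dimF Cl(K_{28})[2]$ since the $S$-class group is a quotient of $Cl(K_{28})$.

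Part (\ref{grh_upper}) is where there is a genuine gap. You correctly identify the strategy (generate $Cl(K_{28})$ by primes below the Bach bound under GRH and do linear algebra on relations), but the concrete method you propose --- enumerate the factor base, produce relations ``via principal-ideal tests,'' and compute the Smith normal form over $\Z$ --- is precisely the computation that does not run at this scale, and you acknowledge as much (``this is the main obstacle'') without resolving it. Since the content of this half of the theorem \emph{is} the computation, the missing idea is how to make it feasible. The paper does three things you do not: (1) it generates relations by number-field-sieve-style sieving for $\cB$-smooth values of $a+b\alpha$ over a carefully skewed region, using a Julia-reduced defining polynomial, rather than by principal-ideal tests; (2) it reduces the relation matrix modulo $2$ and computes only an $\Ftwo$-nullspace (the $2$-rank is all that is needed), avoiding integral HNF/SNF entirely; and (3) it uses the Belabas bound ($200{,}439$, versus the Bach bound of $1{,}202{,}639$) to certify that factor-base primes which fail to appear in enough relations can be discarded without affecting the conditional upper bound. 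There are also concrete pitfalls your outline would not catch, e.g.\ the degree-one primes above $7$ and $13$ whose columns vanish identically mod $2$ because $\ord_{\pp}(\alpha) = -2$ forces $\ord_{\pp}(a+b\alpha)$ to be even for all sieved relations; the paper must sieve special relations $a + 7\alpha$ and $a+13\alpha$ to repair this. Without these ingredients the claimed conclusion $\dimF Cl(K_{28})[2] \le 20$ is not established.
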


Part (\ref{lower}) is obtained by applying a result of Brumer and Kramer \cite{BK}, restated as Proposition \ref{bk_bound} below, together with the lower bound on the rank of $E_{28}(\Q)$ exhibited by Elkies. Part (\ref{grh_upper}) is the result of a large class group computation described in Sections \ref{buchmann} and \ref{sec:choosing parameters}.

In addition to $E_{28}$, Elkies also shared with us the previously unpublished curve $E_{27}$ of rank at least 27 given by
\begin{multline} \label{elkies27}
\resizebox{0.9\hsize}{!}{$y^2 + xy = x^3  -55671146865244401916117773020296610079754015500970x$}\\
 \resizebox{1\hsize}{!}{$+  161981895322788558220906653027519611838007321625214218991719656790551905956.$}
\end{multline}
A list of $27$ independent points on $E_{27}$ (provided by Elkies) is included in Appendix \ref{app::gensE27}.

Using similar machinary as for $E_{28}$, we prove the following results.

\begin{thm}\label{thm:rank27thm} Assume the Generalized Riemann Hypothesis (GRH).
\begin{enumerate}[(i)]
	\item \label{algebraic27} The Mordell-Weil group $E_{27}(\Q)$ is isomorphic to $\Bbb Z^{27}$. 
	\item \label{analytic27} The analytic rank of $E_{27}$ over $\Q$ is at most $27$.
\end{enumerate}
\end{thm}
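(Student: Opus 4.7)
The plan is to mirror the strategy already developed for $E_{28}$ in Theorems \ref{mainthm} and \ref{backup}, applying it to the curve $E_{27}$. For part (\ref{analytic27}), I would directly invoke Bober's method (as cited in \cite{BANTS}): assuming GRH for the Hasse-Weil $L$-function of $E_{27}$, one uses an explicit formula to bound the order of vanishing at the central point in terms of sums over prime powers of the Frobenius traces $a_p$. Since this technique is insensitive to which specific curve we apply it to (only the conductor and the tail of $a_p$'s matter for the numerical input), it will go through for $E_{27}$ provided one computes enough $a_p$ data and certifies the conductor of $E_{27}$. I expect this to be essentially routine once the conductor is in hand.

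For part (\ref{algebraic27}), the plan is a $2$-descent. The lower bound $\mathrm{rank}\, E_{27}(\Q)\geq 27$ follows from the $27$ independent points supplied by Elkies (Appendix \ref{app::gensE27}); so it suffices to prove, conditionally on GRH, that $\dim_{\Ftwo}\Sel_2(E_{27}/\Q)\leq 27$. To do this I would prove an analogue of Theorem \ref{backup}: let $K_{27}$ denote the cubic subfield of the $2$-division field of $E_{27}$, and show that the $2$-rank of $Cl(K_{27})$ equals some explicit integer $r$ (determined by the $2$-torsion structure of $E_{27}$ and the number of primes of bad reduction). The lower bound on this $2$-rank is obtained, exactly as in part (\ref{lower}) of Theorem \ref{backup}, from the Brumer-Kramer inequality (Proposition \ref{bk_bound}) combined with the Elkies points; this forces $\dimF Cl(K_{27})[2]\geq r$. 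The upper bound $\dimF Cl(K_{27})[2]\leq r$ is the conditional half and is obtained by a Buchmann-style class group computation in $K_{27}$ of the type carried out in Sections \ref{buchmann} and \ref{sec:choosing parameters} for $K_{28}$.

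The main obstacle, as with the $E_{28}$ case, will be the GRH-conditional upper bound on $\dimF Cl(K_{27})[2]$: the discriminant of $K_{27}$ is very large, so the Minkowski-type bounds are useless and one must instead rely on the (GRH-conditional) Bach bound to get a manageable factor base. Choosing the factor base, computing enough relations among the prime ideals, and certifying via the regulator that the computed subgroup is all of $Cl(K_{27})$ (not just a finite-index subgroup) will require careful parameter tuning analogous to what is described in Section \ref{sec:choosing parameters}; the same infrastructure should apply, but the field $K_{27}$ is a different cubic and so the computation must be rerun from scratch. Once this is done, combining the matching lower and upper bounds on $\dimF Cl(K_{27})[2]$ with the standard translation between class-group $2$-ranks and $2$-Selmer ranks (the mechanism underlying the derivation of Theorem \ref{mainthm}(\ref{algebraic}) from Theorem \ref{backup}) yields $\dim_{\Ftwo}\Sel_2(E_{27}/\Q)=27$, and hence $E_{27}(\Q)\cong \Z^{27}$ together with trivial $2$-part of $\Sha(E_{27}/\Q)$.
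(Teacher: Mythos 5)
Your overall strategy (Bober's explicit-formula bound for the analytic rank, Brumer--Kramer plus a class-group computation of $K_{27}$ for the Mordell--Weil rank) is the right one, but as written it has a genuine gap: for $E_{27}$ the Brumer--Kramer inequality does not land on $27$. One computes $u(E_{27})=1$ and $n(E_{27})=5$, and the $2$-rank of $Cl(K_{27})$ turns out to be $22$, so Proposition \ref{bk_bound} only yields $\dimF \Sel_2(E_{27}/\Q)\leq 22+1+5=28$. There is no choice of the integer $r$ in your plan that makes the conditional upper bound come out to $27$, so the ``matching bounds'' argument cannot close on its own. The missing ingredient is parity: since the global root number of $E_{27}$ is $-1$, the $2$-parity theorem of Dokchitser and Dokchitser \cite[Theorem 1.4]{DD} forces $\dimF \Sel_2(E_{27}/\Q)$ to be odd, which excludes $28$; combined with the $27$ independent Elkies points this pins the Selmer rank, and hence the rank, at exactly $27$. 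The same parity observation is what the paper uses for part (\ref{analytic27}): the explicit-formula computation with $\Delta=3.1$ only gives $\sum_\gamma f_{3.1}(\gamma)<29$, i.e.\ analytic rank at most $28$, and it is the oddness of $r_{\textnormal{an}}(E_{27})$ (root number $-1$) that brings this down to $27$. Without invoking parity you would need to push the numerical bound strictly below $28$, which requires a much larger $\Delta$ (primes up to $\exp(2\pi\Delta)$) and is not guaranteed to be feasible.

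A secondary issue: your proposed lower bound on $\dimF Cl(K_{27})[2]$ via Brumer--Kramer and the Elkies points only gives $27-1-5=21$, which does not meet the computed upper bound of $22$. The paper instead proves $\dimF Cl(K_{27})[2]\geq 22$ unconditionally by exhibiting independent elements of the field Selmer group $\Sel_2(K_{27})$ coming from left nullvectors of the relation matrix (Section \ref{subsec::provablelowerbounds}); that lower bound is what is needed for Theorem \ref{thm:rank27classgrpthm}, but notably it is not needed for Theorem \ref{thm:rank27thm} itself, where only the GRH-conditional upper bound on the class-group $2$-rank enters. Relatedly, no regulator certification is required for that upper bound: the $\Ftwo$-nullity of the relation matrix bounds $\dimF Cl(K_{27})/Cl(K_{27})^2$ from above as soon as the factor base contains all primes below the GRH bound, even if the relations found generate only a finite-index subgroup of the full relation lattice.
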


\begin{thm}\label{thm:rank27classgrpthm}
Let $K_{27}$ be the cubic subfield of the $2$-division field of $E_{27}$. 
\begin{enumerate}[(i)]
	\item \label{lower27} The $2$-rank of the ideal class group $Cl(K_{27})$ is at least $22$.
	\item \label{grh_upper27} If GRH holds, then the $2$-rank of $Cl(K_{27})$ is exactly $22$.
\end{enumerate}
\end{thm}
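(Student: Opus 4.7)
The plan is to prove Theorem \ref{thm:rank27classgrpthm} by direct analogy with Theorem \ref{backup}, exploiting the same structural relationship between the $2$-rank of $Cl(K_{27})$ and the Mordell-Weil rank of $E_{27}$.

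For part (\ref{lower27}), I would apply Proposition \ref{bk_bound} (the Brumer-Kramer bound) to $E_{27}$. That proposition relates $\dimF \Sel_2(E_{27}/\Q)$ to the quantity $\dimF Cl(K_{27})[2]$ together with explicit local contributions at primes of bad reduction for $E_{27}$, the $\Ft$-dimension of $E_{27}(\Q)[2]$, and a term coming from the signature of $K_{27}$. Since Elkies has provided $27$ independent rational points of infinite order on $E_{27}$ (see Appendix \ref{app::gensE27}), we immediately have $\dimF \Sel_2(E_{27}/\Q) \geq \mathrm{rank}\, E_{27}(\Q) \geq 27$. A direct computation of the correction terms appearing in the Brumer-Kramer inequality, based on the reduction data at each bad prime of $E_{27}$, its rational $2$-torsion, and the signature of $K_{27}$, then isolates the claimed lower bound of $22$ on $\dimF Cl(K_{27})[2]$.

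For part (\ref{grh_upper27}), the plan is to compute $Cl(K_{27})$ directly using Buchmann's algorithm, following the template developed in Sections \ref{buchmann} and \ref{sec:choosing parameters} for $K_{28}$. Under GRH, Bach's effective bounds guarantee that a suitably chosen factor base generates the class group, so Buchmann's algorithm delivers a provably correct output. Concretely, one must produce an explicit defining polynomial for $K_{27}$ as the cubic resolvent of the $2$-division polynomial of $E_{27}$, compute an integral basis and the discriminant, select a factor base and sieving region calibrated to the discriminant, and then collect enough relations among smooth principal ideals to determine the $2$-part of $Cl(K_{27})$ exactly.

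The principal obstacle is the sheer scale of the computation in part (\ref{grh_upper27}). The coefficients of $E_{27}$ exceed $50$ decimal digits, so $\mathrm{disc}(K_{27})$ is enormous, and both the factor base size and the sieving region must be tuned carefully to make the relation collection complete within a feasible time budget. The saving grace is that, with the lower bound from part (\ref{lower27}) already in hand, we need only confirm that the class group computation does not exhibit any $2$-torsion beyond dimension $22$; once Buchmann's algorithm produces a conditional class group whose $2$-rank matches this lower bound, the proof is complete.
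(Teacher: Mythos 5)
Your plan for part (\ref{grh_upper27}) matches the paper's: run the Buchmann-style relation collection for $K_{27}$, with a reduced defining polynomial, a factor base calibrated to the GRH bounds, and a tuned sieve region, and read off a conditional upper bound of $22$ from the $\Ft$-nullity of the relation matrix. The genuine gap is in part (\ref{lower27}). The Brumer--Kramer route that works for $K_{28}$ falls one short here: for $E_{27}$ one computes $u(E_{27}) = 1$ (the discriminant is negative, so $K_{27}$ has one real and one complex embedding) and $n(E_{27}) = 5$, so the chain
$$27 \le \textnormal{rank}\, E_{27}(\Q) \le \dimF \Sel_2(E_{27}/\Q) \le g(E_{27}) + u(E_{27}) + n(E_{27}) = g(E_{27}) + 6$$
yields only $\dimF Cl(K_{27})[2] = g(E_{27}) \ge 21$, not $22$. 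The parity of the Selmer group (root number $-1$) does not help, since the Brumer--Kramer inequality only bounds the Selmer group from above in terms of $g(E_{27})$. This shortfall also breaks part (\ref{grh_upper27}) as you have set it up: with an unconditional lower bound of $21$ and a GRH upper bound of $22$, you cannot conclude that the $2$-rank equals $22$.

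The paper closes the gap with a different, unconditional lower-bound argument that reuses the relation matrix itself. Left nullvectors of the relation matrix $M$ produce elements of the field Selmer group $\Sel_2(K_{27}) = \{ \beta \in K^\times/(K^\times)^2 : \ord_\pp \beta \equiv 0 \pmod 2 \text{ for all } \pp \}$, which sits in the exact sequence $0 \to \cO_K^\times/(\cO_K^\times)^2 \to \Sel_2(K) \to Cl(K)[2] \to 0$ with $\dimF \cO_K^\times/(\cO_K^\times)^2 = r_1 + r_2 = 2$. The authors extract $24$ such elements from the left nullspace of a submatrix of $M$ and certify their independence in $K^\times/(K^\times)^2$ by evaluating quadratic residue characters at auxiliary primes outside the factor base, giving $\dimF Cl(K_{27})[2] \ge 24 - 2 = 22$ unconditionally; combined with the conditional upper bound this proves both parts. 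You would need to add some such step, or another source of one extra independent $2$-torsion class, to complete your argument.
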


To our knowledge, the $2$-rank of $Cl(K_{27})$ is the largest known for a cubic field to have been proven under standard hypotheses. Similarly, the $2$-rank of $Cl(K_{28})$ is the largest known for a totally real cubic field proven under standard hypotheses. 

%To our knowledge, the $2$-rank of $Cl(K_{27})$ is the largest known $2$-rank of a cubic field to have been proven under standard hypotheses. Similarly, the $2$-rank of $Cl(K_{28})$ is the largest known $2$-rank of a totally real cubic field proven under standard hypotheses. 

We also include the following result concerning previous record holding elliptic curves.
\begin{thm}\label{other_curves}Subject to GRH:
\begin{enumerate}[(i)]
%\item \nobreak \begin{multline*}E_{20}:y^2 + xy = x^3 - 431092980766333677958362095891166x  \\ + 5156283555366643659035652799871176909391533088196\end{multline*}
 %has rank 20.
%\item \nobreak \begin{multline*}E_{21}:y^2 + xy + y = x^3 + x^2  -215843772422443922015169952702159835x \\ -19474361277787151947255961435459054151501792241320535\end{multline*}
 %has rank 21.
%\item \begin{multline*}E_{22}:y^2 + xy + y = x^3 - 940299517776391362903023121165864 \\ + 10707363070719743033425295515449274534651125011362\end{multline*}
% has rank 22.
%\item \begin{multline*}E_{23}:y^2 + xy + y = x^3 - 19252966408674012828065964616418441723 \\ + 32685500727716376257923347071452044295907443056345614006\end{multline*}
% has rank 23.
%\item \begin{multline*}E_{24}:y^2 + xy + y = x^3 - 120039822036992245303534619191166796374 \\+ 504224992484910670010801799168082726759443756222911415116\end{multline*}
% has rank 24.
\item $E_{20}:y^2 + xy = x^3 - 431092980766333677958362095891166x  $ \\ 

\vspace{-0.2in}
\hspace{1in} $+ 5156283555366643659035652799871176909391533088196$
 has rank 20.
\item$E_{21}:y^2 + xy + y = x^3 + x^2  -215843772422443922015169952702159835x$ \\ 

\vspace{-0.2in}
\hspace{0.65in} $-19474361277787151947255961435459054151501792241320535$  has rank 21.
\item $E_{22}:y^2 + xy + y = x^3 - 940299517776391362903023121165864$ \\ 

\vspace{-0.2in}
\hspace{0.9in} $ + 10707363070719743033425295515449274534651125011362$ has rank 22.
\item $E_{23}:y^2 + xy + y = x^3 - 19252966408674012828065964616418441723$ \\ 

\vspace{-0.2in}
\hspace{0.4in} $ + 32685500727716376257923347071452044295907443056345614006$ has rank 23.
\item $E_{24}:y^2 + xy + y = x^3 - 120039822036992245303534619191166796374 $ \\ 

\vspace{-0.2in}
\hspace{0.3in} $+ 504224992484910670010801799168082726759443756222911415116$
 has rank 24.
\end{enumerate}
\end{thm}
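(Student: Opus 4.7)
The plan is to prove each part of Theorem~\ref{other_curves} by the same two-step $2$-descent strategy already used to establish Theorems~\ref{mainthm} and~\ref{thm:rank27thm}. For each $r \in \{20, 21, 22, 23, 24\}$, let $K_r$ denote the cubic subfield of the $2$-division field of $E_r$, and let $s_r$ denote the $2$-rank of $Cl(K_r)$. Elkies (and others) have exhibited $r$ independent points on $E_r$, giving the lower bound $\mathrm{rank}(E_r(\Q)) \geq r$. Our aim is to show, subject to GRH, that $\dim_{\Ftwo} \Sel_2(E_r/\Q) = r$, which together with the lower bound forces $E_r(\Q) \cong \Z^r$ (modulo torsion, which is easily checked to be trivial or accounted for directly from the model).

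The first step is to package the lower bound on $\mathrm{rank}(E_r(\Q))$, combined with the Brumer--Kramer bound (Proposition~\ref{bk_bound}), to deduce the appropriate lower bound on $s_r$. Specifically, Brumer--Kramer relates $\dim_{\Ftwo} \Sel_2(E_r/\Q)$ to $s_r$ plus explicit contributions coming from $2$-torsion and from local terms at bad primes; running this in reverse, the existing lower bound on the Mordell--Weil rank translates to a lower bound on $s_r$ for each of the five curves, entirely analogously to part~(\ref{lower}) of Theorem~\ref{backup}.

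The second and harder step is to compute $s_r$ \emph{exactly} (conditional on GRH) for each $K_r$ and verify that it matches the lower bound just obtained. This requires running the GRH-conditional Buchmann-style class group algorithm of Sections~\ref{buchmann} and~\ref{sec:choosing parameters} on each of the five cubic fields $K_{20}, \ldots, K_{24}$. The defining polynomials have large discriminants, so we will need to carefully choose the factor base bound and other algorithm parameters, just as in the $E_{28}$ and $E_{27}$ cases, to make the computation feasible while preserving the GRH-conditional upper bound on $|Cl(K_r)|$.

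The main obstacle is precisely this class group computation: each $K_r$ has an enormous discriminant (growing with the coefficients of the Weierstrass model), and the bottleneck in proving the upper bound is finding enough smooth principal relations in the factor base and certifying that the resulting relation lattice has the correct index. Fortunately, the cubic fields $K_{20}, \ldots, K_{24}$ have smaller discriminants than $K_{28}$ and $K_{27}$, so the computations should be strictly easier than those already carried out for the record curves, and we expect the same software pipeline and parameter-selection heuristics to succeed. Once the GRH-conditional equality $s_r = (\text{lower bound})$ is established for each $r$, combining with step one forces $\dim_{\Ftwo} \Sel_2(E_r/\Q) = r$, and hence $\mathrm{rank}(E_r(\Q)) = r$, completing the proof of each of (i)--(v).
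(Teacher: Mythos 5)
Your overall two-step strategy (known rational points give the rank lower bound; a GRH-conditional class group computation plus Proposition~\ref{bk_bound} gives the Selmer upper bound) is the right skeleton, and your expectation that the fields $K_{20},\dots,K_{24}$ are computationally easier than $K_{27},K_{28}$ is borne out in the paper. But there is a genuine gap in how you propose to close the argument. You assume that the GRH-conditional value of $s_r=\dimF Cl(K_r)[2]$ will \emph{match} the lower bound obtained by ``running Brumer--Kramer in reverse,'' i.e.\ that $s_r = r - u(E_r) - n(E_r)$, so that the Brumer--Kramer inequality alone yields $\dimF \Sel_2(E_r/\Q)\le r$. That is not what happens: Brumer--Kramer is only an upper bound and need not be sharp, and for three of the five curves the computed class-group $2$-rank exceeds $r-u-n$ by one. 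Concretely, the paper finds (conditionally) $g(E_{20})\le 15$ with $u=1$, $n=5$, giving only $\dimF\Sel_2(E_{20}/\Q)\le 21$; similarly $E_{23}$ gives $15+1+8=24$ and $E_{24}$ gives $16+2+7=25$. Your plan stalls at ``rank $\le r+1$'' for $r=20,23,24$, and there is no reason to expect that redoing the class group computation more carefully would lower $g(E_r)$ to the value your step one predicts.

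The missing ingredient is a parity argument. The paper invokes Dokchitser--Dokchitser \cite[Theorem 1.4]{DD}, which gives $\epsilon(E_r)=(-1)^{s(E_r)}$ with $s(E_r)=\dimF\Sel_2(E_r/\Q)-\dimF E_r(\Q)[2]$; since each $E_r$ has trivial $2$-torsion and the root numbers are $+1,-1,-1$ for $r=20,23,24$ respectively, the Selmer rank has the same parity as $r$, and the Brumer--Kramer bounds $21,24,25$ can be shaved down to $20,23,24$. (The same device is used for $E_{27}$ in the proof of Theorem~\ref{thm:rank27thm}.) Without this step your proposal proves only parts (ii) and (iii) of Theorem~\ref{other_curves}, where the numbers happen to close on the nose. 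A secondary, non-fatal point: the ``reverse Brumer--Kramer'' lower bound on $s_r$ in your step one is not actually needed for the rank statement --- only the conditional upper bound on $\Sel_2$ matters, together with the exhibited points --- so you can drop that step entirely once the parity argument is in place.
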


Each of these curves $E_r$ was the first exhibited elliptic curve over $\Q$ with Mordell-Weil rank at least $r$ \cite{DujellaHistory}. For each $r \in \{20,21,22,23,24\}$, Bober proved that GRH for the Hasse-Weil $L$-function of $E_r$ implies the analytic rank of $E_{r}$ is at most $r$ \cite{BANTS}. Theorem \ref{other_curves} was obtained by computing the $2$-rank of the class group of the cubic subfield of the two-division field $\Q(E_r[2])$ for each $E_r$ and then applying Proposition \ref{bk_bound}. This is summarized in Appendix \ref{proof_other_curves}.

\begin{remark}\label{rem::GRHRemark}
The results in this paper depend on GRH in two different ways. Part \ref{analytic} of Theorem \ref{mainthm} requires GRH for the L-function $L(s,E_{28})$ of the elliptic curve $E_{28}$. In all other instances, we use GRH as described in Section \ref{subsec:fbbounds} to assert that $Cl(K)/Cl(K)^2$ is generated by primes below a particular bound. We therefore need to assert GRH for the zeta functions of a large but finite number of unramified quadratic extensions of $K$.
\end{remark}

\subsection{Data}

Our computations use a variant of Buchmann's algorithm which is similar to the number field sieve. As detailed in Section \ref{buchmann}, this algorithm proceeds by collecting relations for $Cl(K)/Cl(K)^2$ supported on primes below a certain bound. We have made these relations available at \url{https://github.com/zevklagsbrun/ElkiesCurve}, so the enterprising reader can verify our results.

\subsection{Acknowledgements}

We would like to express our thanks to Noam Elkies for sharing the curve $E_{27}$ with us and for providing a number of helpful suggestions along the way. We would also like to thank Jonathan Bober for sharing the results of Booker and Dwyer with us.

\section{Bounding Analytic Ranks} 

One way to get conditional upper bounds on the Mordell-Weil rank of an elliptic curve over $\Q$ involves the study of the Hasse--Weil $L$-function $L(s,E)$. %This function is defined by a certain Euler product $L(s,E) = \prod_p L_p(s,E)^{-1}$ for $\Re(s) > 1$, where each local factor $L_p(s,E)$ is a polynomial in $p^{-s}$ encoding information about the ``mod $p$'' reduction of $E$. %\jamie{Give the local factors explicitly if there's room.}
%The modularity theorem \cite{BCDT} relates $L(s,E)$ to the Mellin transform of a certain weight $2$ modular form for $\Gamma_0(N)$ where $N = N(E)$ is the conductor of $E$. This implies $L(s,E)$ has an analytic continuation and satisfies a functional equation $\Lambda(s,E) = \epsilon(E) \Lambda(1-s,E)$ in terms of the completed $L$-function $\Lambda(s,E) = 2N^{s/2} (2 \pi)^{-s}\Gamma(s)L(s,E)$ and global root number $\epsilon(E) \in \{ \pm 1 \}$ of $E$. Note that $L(s,E)$ is the analytically normalized $L$-function, so that the functional equation is symmetric about the critical line $\Re(s) = 1/2$. 

The modularity theorem \cite{BCDT} relates $L(s,E)$ to the Mellin transform of a certain weight $2$ modular form for $\Gamma_0(N)$ where $N = N(E)$ is the conductor of $E$. This implies $L(s,E)$ has an analytic continuation and that the completed $L$-function $\Lambda(s,E) = 2N^{s/2} (2 \pi)^{-s}\Gamma(s)L(s,E)$ satisfies the functional equation $\Lambda(s,E) = \epsilon(E) \Lambda(1-s,E)$ where $\epsilon(E) \in \{ \pm 1 \}$ is the global root number of $E$. Note that $L(s,E)$ is the analytically normalized $L$-function, so that the functional equation is symmetric about the critical line $\Re(s) = 1/2$. 

Define the analytic rank of $E$ by $r_{\textnormal{an}}(E) = \textnormal{ord}_{s = 1/2} L(E,s)$. The functional equation implies $\epsilon(E) = (-1)^{r_{\textnormal{an}}(E)}$. The Birch and Swinnerton-Dyer rank conjecture (BSD) asserts that $r_{\textnormal{an}}(E)$ is equal to the rank of the Mordell--Weil group $E(\Q)$. 
%This provides a straightforward conditional algorithm for determining the ranks of an elliptic curve: we can spend our days obtaining lower bounds which will eventually converge to the Mordell-Weil rank, and our nights computing lower bounds which will eventually converge to the analytic rank. If BSD is true, then these bounds will eventually meet at the common value of the Mordell-Weil rank and the analytic rank of $E$.
%
%One drawback of this method is that obtaining upper bounds on the analytic rank can be prohibitively slow when the conductor is large. Even aided by the congruence 
%\begin{equation}
%r_{\textnormal{an}}(E) \equiv \tfrac{1}{2}(1-\epsilon(E/\Q)) \quad \bmod 2,
%\end{equation}
%which follows from Equation \ref{fe}, it may be computationally infeasible to compute an unconditional sharp upper bound on $r_{\textnormal{an}}(E)$. 

%In \cite{BANTS}, Bober describes a way to compute conditional upper bounds on $r_{\textnormal{an}}(E)$ subject to GRH for $L(s,E)$. He released an implementation of this method as part of Purple Sage \cite{PSAGE}. This has been ported to the main release of SageMath by Simon Spicer \cite{SRM}.

In \cite{BANTS}, Bober describes a way to compute conditional upper bounds on $r_{\textnormal{an}}(E)$ subject to GRH for $L(s,E)$. The main tool used is the explicit formula  \cite[Lemma 2.1]{BANTS} expressing the $\sum_\gamma f(\gamma)$ of values of a test function $f$ as $s = 1/2 + i \gamma$ ranges over the zeros of $L(s,E)$. This is a natural analogue of Weil's formulation of the Riemann-von Mangoldt formula for $\zeta(s)$. The terms in the explicit formula are easy to compute for test functions chosen from the parametrized Fej\'er kernel
\begin{equation}\label{test_function} 
%f_{\Delta}(x) = \textnormal{sinc}^2(\Delta \pi x) = \left ( \dfrac{\sin(\Delta \pi x)}{\Delta  \pi x}\right )^2
f_{\Delta}(x) = \left ( \dfrac{\sin(\Delta \pi x)}{\Delta  \pi x}\right )^2
\end{equation}
for real numbers $\Delta \geq 1$. Since $f_\Delta(0) = 1$, we have 
\begin{equation}
	\sum_\gamma f_\Delta(\gamma) = r_{\textnormal{an}}(E) + \sum_{\gamma \neq 0} f_\Delta(\gamma).
\end{equation}
%as $s = 1/2 + i \gamma$ ranges over the zeros of $L(s,E)$. 
If $s = 1/2 + i \gamma$ satisfies $\gamma \in \Bbb R$, then $f_{\Delta}(\gamma) \geq 0$. Thus, if the GRH holds for $L(s,E)$, then $\sum_{\gamma \neq 0} f_\Delta(\gamma) \geq 0$ and hence $\sum_{\gamma} f_{\Delta}(\gamma)$ is a conditional upper bound for $r_{\textnormal{an}}(E)$.
%He released an implementation of this method as part of Purple Sage \cite{PSAGE}. This has been ported to the main release of SageMath by Simon Spicer \cite{SRM}.
%An implementation of this method is included in the most recent versions of Sage. 
The explicit formula lets us express this bound as a sum of three more easily understood terms.

\begin{prop}[Bober \cite{BANTS}] Let $E$ be an elliptic curve over $\Q$ of conductor $N(E)$ and let $\Delta \geq 1$ be a real number. For each prime $p$, choose a factorization
$$L_p(s,E) = (1-\alpha(p)p^{-s})(1-\beta(p)p^{-s})$$ 
in terms of the Euler product $L(s,E) = \prod_{p} L_p(s,E)^{-1}$ for $\Re(s) > 1$. Define the arithmetic term\begin{equation}
g_{\textnormal{an}}(\Delta,E)  =  - \dfrac{1}{\Delta \pi} \sum_{p \leq \exp(2 \Delta \pi)} \log p \sum_{k = 1}^{\lfloor{2 \Delta \pi / \log p}\rfloor}\frac{k}{p^{k/2}} \bigl (\alpha(p)^k + \beta(p)^k \bigr)\left(1 - \dfrac{\log p^{k/2} }{\Delta \pi} \right),
\end{equation}
the archimedean term
\begin{equation} 
u_{\textnormal{an}}(\Delta)  = \dfrac{1}{\pi} \Re \left \{ \int_{-\infty}^{\infty} \dfrac{\Gamma'}{\Gamma}(1 + it)f_{\Delta}(t) dt \right \},
\end{equation}
and the conductor term
\begin{equation}
n_{\textnormal{an}}(\Delta,E)  = \dfrac{1}{\Delta \pi} \log \dfrac{\sqrt{N(E)}}{2\pi}.
\end{equation}
As $s = 1/2 + i\gamma$ ranges over the zeros of $L(s,E)$, we have
\begin{equation}\label{explicit}
\sum_\gamma f_\Delta(\gamma) = g_{\textnormal{an}}(\Delta,E)  + u_{\textnormal{an}}(\Delta) + n_{\textnormal{an}}(\Delta,E).
\end{equation}
%%where $1/2 + i \gamma$ ranges over the zeros of $L(s,E)$ counting multiplicity and the other terms are
%%\begin{itemize}
%%	\item the conductor term $T_N(w,E) = \log N(E)/(2w\pi)  - \log 2 \pi / (w \pi),$
%%	\item the $\Gamma$-term $T_\Gamma(w) = \dfrac{1}{\pi} \Re \left \{ \int_{-\infty}^{\infty} \dfrac{\Gamma'}{\Gamma}(1 + it)f(t;w)dt \right \},$
%%	\item and the local data term
%%	\begin{equation}T_{\textnormal{loc.}}(w,E) =  - \dfrac{1}{w \pi} \sum_{p \leq \exp(2 w \pi)} \log p \sum_{k = 1}^{\lfloor{2 w \pi / \log p}\rfloor}\frac{k}{p^{k/2}} (\alpha(p)^k + \beta(p)^k)\left(1 - \dfrac{k\log p }{2w \pi} \right)\end{equation} where $\alpha(p)$ and $\beta(p)$ are defined up to permutation by 
%%	\begin{equation}
%%	L_p(s,E) = (1-\alpha(p)p^{-s})(1-\beta(p)p^{-s})
%%	\end{equation}
%%	and $\alpha(p)\beta(p) = 0$ when $E$ has bad reduction at $p$.
%%\end{itemize}
\end{prop}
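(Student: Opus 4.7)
The plan is to derive the stated identity as a direct specialization of the Weil--Guinand explicit formula for $L(s,E)$ to the Fej\'er-kernel test function $f_\Delta$. The general explicit formula expresses $\sum_\gamma f(\gamma)$, as $s = 1/2 + i\gamma$ ranges over the nontrivial zeros of $L(s,E)$, as the sum of three contributions: a prime-power contribution from the Dirichlet coefficients of $-L'/L(s,E)$, an archimedean contribution from the gamma factor, and a constant term from the conductor and normalizing constants in the completed $L$-function $\Lambda(s,E) = 2N^{s/2}(2\pi)^{-s}\Gamma(s)L(s,E)$. The task is to match each of these three contributions, after specialization at $f_\Delta$, to the stated formulas for $g_{\textnormal{an}}$, $u_{\textnormal{an}}$, and $n_{\textnormal{an}}$.

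First I would compute the Fourier transform of $f_\Delta$. A standard calculation shows $\hat f_\Delta$ is a triangular bump, compactly supported on $[-2\pi\Delta, 2\pi\Delta]$ with the convention natural to the explicit formula. This compact support is exactly what truncates the prime-power sum at $p^k \le e^{2\pi\Delta}$, matching the outer range $p \le \exp(2\Delta\pi)$ and inner range $k \le \lfloor 2\Delta\pi/\log p\rfloor$ appearing in the definition of $g_{\textnormal{an}}$. Substituting the Euler expansion
\[
-\frac{L'}{L}(s,E) = \sum_p \log p \sum_{k\ge 1} (\alpha(p)^k + \beta(p)^k)\, p^{-ks}
\]
into the explicit formula and pairing it with $\hat f_\Delta$ then yields the remaining factors in $g_{\textnormal{an}}$: the $p^{-k/2}$ comes from shifting to the critical line, the triangular profile of $\hat f_\Delta$ produces the linear factor $(1 - \log p^{k/2}/(\Delta\pi))$, and the factor of $k$ together with the prefactor $-1/(\Delta\pi)$ arises from the overall normalization.

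The remaining two terms fall out of the other factors of $\Lambda(s,E)$. The logarithmic derivative $\Gamma'/\Gamma$ paired with $f_\Delta$ along the shifted line $\Re(s)=1$ gives the archimedean integral $u_{\textnormal{an}}(\Delta)$ directly. The logarithmic derivative of $2 N^{s/2}(2\pi)^{-s}$ is the constant $\tfrac{1}{2}\log N - \log(2\pi) = \log(\sqrt{N(E)}/(2\pi))$; pairing this constant with $f_\Delta$ and using $\int f_\Delta = 1/\Delta$ produces $n_{\textnormal{an}}(\Delta, E)$ after dividing by $\pi$. The main obstacle is not conceptual but bookkeeping: Fourier conventions, placements of $2\pi$'s between $f_\Delta$ and $\hat f_\Delta$, signs, and prefactors must all be tracked meticulously, so the cleanest way to verify the final form is to compare term-by-term with Bober's detailed derivation in \cite{BANTS}.
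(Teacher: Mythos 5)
The paper does not prove this proposition at all: its ``proof'' is the single line ``This is [Equation 3] of Bober's paper,'' so your derivation sketch is doing work that the paper simply outsources to the citation. Your route --- specialize the Weil--Guinand explicit formula for $\Lambda(s,E) = 2N^{s/2}(2\pi)^{-s}\Gamma(s)L(s,E)$ to the Fej\'er kernel $f_\Delta$, use the compact triangular support of $\hat f_\Delta$ to truncate the prime-power sum, and read off the conductor term from $\tfrac{d}{ds}\log\bigl(2N^{s/2}(2\pi)^{-s}\bigr) = \log(\sqrt{N}/2\pi)$ together with $\int f_\Delta = 1/\Delta$ --- is exactly how Bober obtains the formula, so the outline is the standard and correct one. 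The archimedean and conductor terms check out as you describe.

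There is, however, one concrete point where your argument does not close. Your own expansion
\begin{equation*}
-\frac{L'}{L}(s,E) = \sum_p \log p \sum_{k\ge 1} \bigl(\alpha(p)^k + \beta(p)^k\bigr)\, p^{-ks}
\end{equation*}
assigns the prime power $n = p^k$ the von Mangoldt weight $\Lambda(n) = \log p$, with no additional factor of $k$; pairing this with the triangular profile of $\hat f_\Delta$ therefore produces
\begin{equation*}
-\frac{1}{\Delta\pi}\sum_{p}\log p\sum_{k}\frac{\alpha(p)^k+\beta(p)^k}{p^{k/2}}\Bigl(1 - \frac{\log p^{k/2}}{\Delta\pi}\Bigr),
\end{equation*}
which differs from the stated $g_{\textnormal{an}}(\Delta,E)$ by the inner factor of $k$. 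Your claim that this factor of $k$ ``arises from the overall normalization'' is not justified and cannot be: the $-1/(\Delta\pi)$ prefactor does come from $\hat f_\Delta(0)=1/\Delta$ and the $1/\pi$ in the explicit formula, but no step of the derivation introduces a multiplicative $k$. So as written your argument proves a formula that disagrees with the displayed one (the discrepancy appears to be a transcription slip in the statement relative to Bober's Equation 3, which carries no such $k$); you should either flag that discrepancy explicitly or exhibit the step that produces the $k$, rather than absorbing it into an unspecified ``normalization.''
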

\begin{proof} This is \cite[Equation 3]{BANTS}
\end{proof}
%
%%Since GRH implies $\sum_{\gamma \neq 0} f(\gamma; \Delta) \geq 0$, computing a conditional upper bound on $r_{\textnormal{an}}(E)$ is reduced to computing the ``arithmetic term'' $g_{\textnormal{an}}(\Delta, E)$, the ``archimedean term'' $u_{\textnormal{an}}(\Delta)$, the ``conductor term'' $n_{\textnormal{an}}(\Delta,E)$. 

Each term on the righthand side of equation (\ref{explicit}) can be computed to high precision. To compute the arithmetic term, one must compute the local factors $L_p(s,E)$ for $p \leq \exp(2 \Delta \pi)$. For $\Delta \leq 4.41$, this can be done efficiently with Andrew Sutherland's \texttt{smalljac} package. 
Simon Spicer observed that the archimedean term has the closed form
\begin{equation}
u_{\textnormal{an}}(\Delta) = -\dfrac{\eta}{\pi^2 \Delta} + \dfrac{1}{2 \pi^3 \Delta^2} \left ( \dfrac{\pi^2}{6} - \textnormal{dilog}(e^{-2\pi \Delta}) \right ),
\end{equation}
in terms of the Euler-Mascheroni constant $\eta \approx 0.57721566$ and the dilogarithm function $\textnormal{dilog}(x) = \sum_{n \geq 1} {x^n}/{n^2}$. %Finally, the conductor term can be computed directly from the conductor $N(E)$.
We may therefore effectively compute upper bounds for $r_{\textnormal{an}}(E)$ conditional only on GRH for $L(s,E)$. This functionality is implemented in the \texttt{SageMath} \cite{SRM} software package via the command \texttt{analytic\_rank\_upper\_bound}.
\begin{proof}[Proof of part (\ref{analytic}) of Theorem \ref{mainthm}:]
%In both cases, we use Spicer's implementation of Bober's method described above.
Using the Sage (version 7.1) implementation of Bober's method with a tightness parameter of $\Delta = 4$, we found that $r_\tn{an}(E_{28}) \le \sum_\gamma f_4(\gamma) < 30$. Since $E_{28}$ has root number $+1$, $r_\tn{an}(E_{28})$ must be even and as a result, $E_{28}$ has analytic rank at most 28. This computation took approximately 40 hours on an Intel i7 processor. (We found that the runtime was significantly improved by passing a list of bad primes of $E_{28}$ to \texttt{analytic\_rank\_upper\_bound} and setting the flag \texttt{adaptive} to \texttt{false}.)
%
%\begin{itemize}
%\item[(i)] 
%For $E_{28}$, the tightness parameter $\Delta = 4$ was sufficient. We compute the conductor $N(E_{28})$ and the root number $\epsilon(E_{28}) = +1$ for the set
%$$\{ 2,3,5,7,11,13,17,19, 48463, 20650099, \pp_{44}, \pp_{77}\}$$
%of primes dividing the discriminant of Equation (\ref{elkies}). The rational primes
%\begin{align*}
%\pp_{44} &= 315574902691581877528345013999136728634663121\\
%\pp_{77} &= \resizebox{0.9\hsize}{!}{$376018840263193489397987439236873583997122096511452343225772113000611087671413,$}
%\end{align*}
%are of $44$ and $77$ digits respectively.
%Using the \texttt{analytic\_rank\_upper\_bound} command in \texttt{SageMath}, we find that the sum $r_\tn{an}(E_{28}) \sum_\gamma f_4(\gamma) < 30$. This takes approximately 40 hours on an $8$-thread Intel Core i7-4980HQ processor. Thus, if GRH holds for $L(s, E_{28})$, then the integer $r_\tn{an}(E_{28})$ is at most $29$. Moreover, $r_\tn{an}(E_{27})$ must be even, since $\epsilon(E_{28}) = +1$. Thus we conclude, subject to GRH, that $r_\tn{an}(E_{28}) \leq 28$.
\end{proof}

\begin{proof}[Proof of part (\ref{analytic27}) of Theorem \ref{thm:rank27thm}:]
We used Bober's method as above but with a tightness parameter of $\Delta = 3.1$. We found that $r_\tn{an}(E_{27}) \le \sum_\gamma f_{3.1}(\gamma) < 29$. Since $E_{27}$ has root number $-1$, $r_\tn{an}(E_{27})$ must be odd and as a result, $E_{27}$ has analytic rank at most 27. This computation took approximately three minutes on an Intel i7 processor.
%\item[(ii)]
%We specify the tightness parameter $\Delta = 3.1$ for the curve $E_{27}$. We compute the conductor $N(E_{27})$ and the root number $\epsilon(E_{27}) = -1$ from the
%set
%$$\{ 2, 3, 7, 13, 19, 23, 29, 43, 97,
% 1333872817, \frak p_{21}, \frak p_{28}, \frak p_{57} \}$$
% of primes dividing the discriminant of Equation (\ref{elkies27}). The rational primes $$\frak p_{21} = 622918016143240238107, \qquad \frak p_{28} = 4018426744559269274579405699, \quad \textnormal{ and }$$
%$$\frak p_{57} = 461516545359925528236128783904359556661401233959492820033$$
%are of $21$, $28$, and $57$ digits respectively. %Since $E_{27}$ has multiplicative reduction at each of these primes, the conductor $N(E_{27})$ is their product. %Computing the global root number as a product of local root numbers, we find $\epsilon(E_{27}) = -1$. 
%
%
%Using \texttt{analytic\_rank\_upper\_bound}, we find that the sum $\sum_{\gamma} f_{3.1}(\gamma)$ is less than $29$. This takes approximately $3$ minutes on an $8$-thread Intel Core i7-4980HQ processor. Thus, if GRH holds for $L(s,E_{27})$, then the integer $r_{\tn{an}}(E_{27})$ is at most $28$. Moreover, $r_{\tn{an}}(E_{27})$ must be odd, since $\epsilon(E_{27}) = -1$. Thus we conclude, subject to GRH, that $r_{\tn{an}}(E_{27}) \leq 27$.
%\end{itemize}
\end{proof}

\section{The 2-Selmer Group}\label{selgrp}

One of the most common methods for obtaining an upper bound on the Mordell-Weil rank of $E$ is studying the $2$-Selmer group $\Sel_2(E/\Q)$ of $E$. %The 2-Selmer group of $E$ is a computable finite dimensional $\F_2$-vector space that gives an upper bound on the rank of $E$.
We briefly recall the definition and some important properties here and refer the reader to Chapter X of \cite{AEC} for a more details.

If $E$ is an elliptic curve defined over $\Q$, then $E(\Q)/2E(\Q)$ maps into $H^1(\Q, E[2])$ via the Kummer map $\delta$. The following diagram commutes for every place $v$ of $\Q$, where $\delta_v$ is the local Kummer map.
$$\centerline{
%\leavevmode
\begin{xy}
\xymatrix{%
%\ar @{} [dr] |{=}
E(\Q)/2E(\Q)  \ar[d] \ar[r]^{\delta} & H^1(\Q, E[2]) \ar[d]^{\textnormal{Res}_v} \\
E(\Q_v)/2E(\Q_v)   \ar[r]^{\delta_v} & H^1(\Q_v, E[2])}
\end{xy}
}$$
The \textbf{2-Selmer group} of $E/\Q$, denoted $\Sel_2(E/\Q)$, is defined as
\begin{equation*}
\Sel_2(E/\Q) = \left \{c \in H^1(\Q, E[2]) : \textnormal{Res}_v(c) \in \delta_v\left (E(\Q_v)/2E(\Q_v) \right) \text{ for all } v \text{ of } \Q \right \}.
\end{equation*}
This group has the structure of a finite dimensional $\Ft$ vector space and it sits in the exact sequence
\begin{equation*}
0 \too E(\Q)/2E(\Q) \too \Sel_2(E/\Q) \too \Sha(E/\Q)[2] \too 0,
\end{equation*}
where $\Sha(E/\Q)$ is the Tate-Shafarevich group of $E$. It follows that the rank of $E(\Q)$ is at most $\dimF \Sel_2(E/\Q) - \dimF E(\Q)[2]$. Unlike the rank of $E(\Q)$, $\dimF \Sel_2(E/\Q)$ is known to be computable. Computing $\textnormal{Sel}_2(E/\Q)$ gives an upper bound on the rank of $E(\Q)$. Often $\Sha(E/\Q)[2]$ is trivial, in which case this bound is sharp.

\subsection{The Brumer-Kramer Bound}
%In \cite{BK}, Brumer and Kramer used properties of formal groups to to give a ``first majorization'' of $\dimF \textnormal{Sel}_2(E/\Q)$ when $E$ is an elliptic curve over $\Q$ with $\dimF E(\Q)[2] = 0$. In particular, they bound $\textnormal{Sel}_2(E/\Q)$ in terms of the $2$-rank of the class group of the cubic subfield of $\Q(E[2])$.

In \cite{BK}, Brumer and Kramer study the structure of the cohomology group $H^1(\Q,E[2])$ and of the images of $E(\Q_v)/2E(\Q_v)$ in $H^1(\Q_v,E[2])$ when $E(\Q)[2] = 0$. By doing so, they obtain an upper bound on $\dimF \Sel_2(E/\Q)$ in terms of the $2$-rank of the class group of the cubic subfield of $\Q(E[2])$ and information about the places where $E$ has bad reduction. To state their result, we need to first introduce some notation.

Let $\varDelta$ be the discriminant of $E$ and set $\Phi_m$ to be the set of primes $p$ at which $E$ has multiplicative reduction and $\textnormal{ord}_p\varDelta$ is even. Set $\Phi_a$ to be the set of primes at which $E$ has additive reduction, and for each $p \in \Phi_a$, let $n_p$ be the number of primes of $K$ lying above $p$, where $K$ is the cubic subfield of the $2$-division field of $E$. We then define $$g(E) = \dimF Cl(K)[2], \quad u(E) = \begin{cases} 1 & \textnormal{if $\varDelta < 0$,} \\ 2 & \textnormal{if $\varDelta > 0$,} \end{cases} \quad \text{and} \quad n(E) = \#\Phi_m + \sum_{p \in \Phi_a} (n_p - 1).$$
%used properties of formal groups to to give a ``first majorization'' of $\dimF \textnormal{Sel}_2(E/\Q)$ when $E$ is an elliptic curve over $\Q$ with $\dimF E(\Q)[2] = 0$. In particular, they bound $\textnormal{Sel}_2(E/\Q)$ in terms of the $2$-rank of the class group of the cubic subfield of $\Q(E[2])$.

%\begin{prop}[Brumer and Kramer \cite{BK}] \label{bk_bound} Let $E$ be an elliptic curve over $\Q$ with no rational point of order $2$. Let $\varDelta(E)$ denote the minimal discriminant of $E$. Let $\Phi_m$ denote the set of rational primes $p$ at which $E$ has multiplicative reduction with $\textnormal{ord}_p\varDelta(E) \equiv 0 \bmod 2$. Let $\Phi_a$ denote the set of rational primes at which $E$ has additive reduction. Let $K$ be the cubic subfield of the $2$-division field $\Q(E[2])$ of $E$. For each $p \in \Phi_a(E)$, let $n_p$ be the number of primes of $K$ lying above $p$. Define the integers
%$$g(E) = \dimF Cl(K)[2], \quad u(E) = \begin{cases} 1 & \textnormal{if $\varDelta(E) < 0$,} \\ 2 & \textnormal{if $\varDelta(E) > 0$,} \end{cases} \quad \text{and} \quad n(E) = \#\Phi_m + \sum_{p \in \Phi_a} (n_p - 1).$$
%$$\begin{aligned}
%	g(E) & = \dimF \textnormal{Cl}(K)[2], \\
%	u(E) & = \begin{cases} 1 & \textnormal{if $\Delta_E < 0$,} \\ 2 & \textnormal{if $\Delta_E > 0$,} \end{cases}\\
%	q(E) & = \#\Phi_m + \sum_{p \in \Phi_a} (n_p - 1).
%\end{aligned}$$
\begin{prop} [Brumer and Kramer \cite{BK}] \label{bk_bound} With notation as above, we have
%With these definitions, we have
\begin{equation} \label{first_majorization}
\dimF \textnormal{Sel}_2(E/\Q) %= \textnormal{rank}\, E(\Q) + \dimF \Sha(E/\Q)[2] 
						\leq g(E) + u(E) + n(E).
\end{equation}
\end{prop}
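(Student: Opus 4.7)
The plan is a standard 2-descent argument, using the cubic resolvent field $K$ to model $H^1(\Q,E[2])$. Because $E(\Q)[2]=0$, the Galois module $E[2]$ is identified with the kernel of the norm $N \colon \mathrm{Ind}^\Q_K(\mu_2) \to \mu_2$. Applying Shapiro's lemma to the long exact sequence of Galois cohomology attached to
\begin{equation*}
0 \to E[2] \to \mathrm{Ind}^\Q_K(\mu_2) \to \mu_2 \to 0,
\end{equation*}
I would first produce an injection $\partial\colon H^1(\Q,E[2]) \hookrightarrow K^\times/(K^\times)^2$ whose image is the subgroup $W$ of norm-square classes (the connecting map $H^0(\Q,\mu_2) \to H^1(\Q,E[2])$ vanishes because $3$ is odd). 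On rational points, $\partial$ is the classical ``$x-e$'' map, where $e \in K$ is a fixed root of the cubic cutting out $K$. Identical identifications hold for each completion, giving a commutative diagram that transports the 2-Selmer condition to the inside of $W$.

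The next step is to localize. At a prime $p$ of good reduction with $p\ne 2$, a formal-group computation shows $\delta_p(E(\Q_p)/2E(\Q_p))$ consists of classes that are unramified at every prime of $K$ above $p$; at the archimedean place the image is controlled by the signature of $K$. Together, these force every Selmer class into the subgroup $V \subseteq W$ of elements $\alpha$ with $(\alpha) = \mathfrak{b}^2 \mathfrak{c}$ and $\mathfrak{c}$ supported on the set $S$ of primes of $K$ above $\Phi_m \cup \Phi_a \cup \{2\}$. A short exact sequence derived from the $S$-unit/$S$-class group sequence,
\begin{equation*}
0 \to \cO_{K,S}^\times/(\cO_{K,S}^\times)^2 \to V \to Cl(\cO_{K,S})[2] \to 0,
\end{equation*}
then controls $V$ in terms of $\cO_K^\times$ and $Cl(K)$ after further standard exact sequences.

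From here I would bound each piece. After the standard cancellation between $S$-class-group relations and excess $S$-units, the combined ``global'' contribution to the norm-trivial part of $V$ collapses to $g(E) + u(E)$: the $g(E)$ comes from $Cl(K)[2]$, while $u(E)$ is the dimension of the norm-trivial subgroup of $\cO_K^\times/(\cO_K^\times)^2$, which by Dirichlet's theorem plus the observation that $N_{K/\Q}(-1) = -1$ equals $r_1 + r_2 - 1$ and therefore yields $1$ when $\varDelta < 0$ and $2$ when $\varDelta > 0$. The remaining dimensions come from the bad-prime local conditions: at each $p \in \Phi_m$, the image of $\delta_p$ survives with one extra controlled dimension, contributing $\#\Phi_m$ in total; at each $p \in \Phi_a$, the codimension of $\delta_p(E(\Q_p)/2E(\Q_p))$ in the local analogue of $V$ contributes exactly $n_p - 1$. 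Summing these three contributions produces the asserted bound.

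The principal obstacle is the local analysis at the additively reduced primes. Unlike the good and multiplicative cases, no formal-group or Hensel's-lemma shortcut is available there, and one must compute $\delta_p(E(\Q_p)/2E(\Q_p))$ inside $\prod_{\mathfrak{p} \mid p} K_\mathfrak{p}^\times/(K_\mathfrak{p}^\times)^2$ by hand; extracting the sharp factor $n_p - 1$ requires tracking how the splitting of $p$ in $K$ interacts with the component group of the N\'eron model, and involves a case-by-case appeal to Tate's algorithm and the Kodaira-N\'eron classification.
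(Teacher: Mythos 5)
The paper does not actually prove this proposition: its ``proof'' is the single line citing \cite[Prop.~7.1]{BK}, so there is no in-paper argument to compare against. Measured against the cited source, your outline reconstructs the correct architecture: the identification of $E[2]$ (using $E(\Q)[2]=0$) with the norm-one part of $\mathrm{Ind}^\Q_K(\mu_2)$, the Shapiro-lemma injection $H^1(\Q,E[2])\hookrightarrow \KmodS$ with image the norm-square classes, the unramified local conditions away from bad primes and $2$, and the resulting $S$-unit/$S$-class-group bookkeeping are all exactly how Brumer and Kramer proceed. Your accounting of the global terms is also right: $g(E)$ from $Cl(K)[2]$, and $u(E)=r_1+r_2-1$ from the norm-trivial units, using $N_{K/\Q}(-1)=-1$ to see the norm character on $\cO_K^\times/(\cO_K^\times)^2$ is onto.

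As a proof, however, the proposal stops short of the part that carries all of the content of the sharp constants, and you say so yourself. Three local computations are asserted rather than performed: (i) that a prime of multiplicative reduction contributes an extra dimension exactly when $\textnormal{ord}_p\varDelta$ is even (your sketch puts all bad primes into $S$ but never explains why the odd-valuation multiplicative primes and, when $E$ has good reduction there, the prime $2$ contribute nothing to the final count); (ii) the precise codimension $n_p-1$ at additive primes, which you correctly identify as requiring the interaction of the splitting of $p$ in $K$ with the N\'eron component group; and (iii) the dimension count $\dimF E(\Q_v)/2E(\Q_v)$ at each place, against which the local images must be measured for the cancellation you invoke to come out to exactly $g(E)+u(E)+n(E)$. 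None of these is a wrong step, but until they are carried out the inequality (\ref{first_majorization}) is not established; for the purposes of this paper the honest move is the one the authors make, namely to quote \cite[Prop.~7.1]{BK} directly.
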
 
\begin{proof} This is \cite[Prop. 7.1]{BK}.
\end{proof}
We are now is a position to prove the lower bound from Theorem \ref{backup}.
\begin{proof}[Proof of Theorem \ref{backup} - part (\ref{lower})] Since the rank of $E_{28}(\Q)$ is at least $28$, applying Proposition 3.1 to $E_{28}$ gives
\begin{equation} \label{bk_elkies}
28 \leq \dimF \textnormal{Sel}_2(E_{28}/\Q) \leq g(E_{28}) + u(E_{28}) + n(E_{28}).
\end{equation}
The arithmetic term $g(E_{28})$ is the $2$-rank of the ideal class group of the cubic subfield $K_{28}$ of $\Q(E_{28}[2])$. Since $\varDelta(E_{28}) > 0$, we have $u(E_{28}) = 2$. Computing local information about $E_{28}$, we find that $\Phi_m = \{5,7,11,13\}$, $\Phi_a = \{3\}$, and the prime $3$ splits completely in $K_{28}$. This gives the conductor term $n(E_{28}) = 4 + (3 - 1) = 6$. Combining this information with inequality (\ref{bk_elkies}), we get $\dimF Cl(K_{28})[2] \geq 28 - u(E_{28}) - n(E_{28}) = 20$.
\end{proof}

\section{An Algorithm for Computing $\dimF Cl(K)[2]$}\label{buchmann}

To compute an upper bound on $\dimF \Sel_2(E_{28}/\Q)$, we need to bound $g(E_{28}) = \dimF Cl(K_{28})[2]$. The method we use to bound $\dimF Cl(K_{28})[2]$ is based on an algorithm of Buchmann et al. in \cite{BJNTW} that is inspired by the number field sieve. While Buchmann's algorithm is able to compute the exact structure of $Cl(K)$ subject to GRH for a general number field $K$, we are able to take a few shortcuts that simplify the algorithm since $K_{28}$ is a cubic field and because we are only concerned with $\dimF {Cl}(K_{28})[2]$. We describe our variant of Buchmann's algorithm below.

%for computing the full structure of $Cl(K)$ where $K$ is a general number field. 
%, which is We briefly describe the basic ideas here. While Buchmann's algorithm includes techniques for computing the exact structure of $Cl(K)$ subject to GRH, only the upper bound on is necessary to compute $\dimF \textnormal{Sel}_2(E_{28}/\Q)$, and we therefore limit our discussion to parts necessary for this application.

\subsection{A Presentation for $Cl(K)$}\label{subsec::presentation}

We start with a factor base $\mathcal{P}$ of degree one prime ideals of $\cO_K$ (including ramified prime ideals with residue class field degree equal to one) with norm less than a bound $\mathcal{B}$. The factor base $\mathcal{P}$ will serve as a generating set for the class group $Cl(K)$. 

To compute a presentation for $Cl(K)$, we need relations supported on $\mathcal{P}$. Relations are given by principal ideals $(\beta)$ such that $\mathbf{N}_{K/\Q} \beta$ is $\mathcal{B}$-smooth and $(\beta)$ factors as a product of primes in $\mathcal P$.

Factoring these relations as $$(\beta) = \prod_{\frak p \in \mathcal{P}} \frak p^{\textnormal{ord}_{\frak p}(\beta)},$$ we obtain a matrix $M$ with entries in $\Z$. Assuming that $\mathcal{P}$ is large enough and $M$ contains enough relations, the structure of $Cl(K)$ can then be read off from the Hermite normal form (HNF) of $M$.

%Prospective relations are found via sieving for $\beta$ with smooth norm. The relation give We write our identify and store relations by factoring the ideal
%$$(\alpha) = \prod_{\frak p \in \mathcal{P}} \frak p^{\textnormal{ord}_{\frak p}(\alpha)}$$
%as a product of primes in $\mathcal{P}$.

%These relations are represented by a matrix with entries in $\Z$.

Computing the HNF of a large matrix is difficult because it requires doing a large integral linear algebra computation. However, since we are only interested in computing the size of $Cl(K)[2] \simeq Cl(K)/Cl(K)^2$, we can take the coefficients of this matrix to be in $\mathbb{F}_2$ instead. In this case, the dimension of the right nullspace of this $\Bbb F_2$-matrix is an upper bound for the dimension of the subspace of $Cl(K)/Cl(K)^2$ generated by the primes in $\mathcal P$.

\subsection{The Size of the Factor Base $\mathcal{P}$}\label{subsec:fbbounds}

By a result of Bach \cite{Bach}, if GRH holds, then $Cl(K)$ is generated by the primes of $K$ with norm less than $12(\log \mathfrak{d}(K))^2$ (the ``Bach bound'') where $\mathfrak{d}(K)$ is the discriminant of $\cO_K$. Subsequent work by Belabas, Diaz y Diaz, and Friedman \cite{BDyDF} gives an alternative and less explicit bound $B_K$ (the ``Belabas bound'') such that if GRH holds, then $Cl(K)$ is generated by the primes of $K$ with norm less than $B_K$. 

While the Belabas bound is asymptotically worse than the Bach bound, it is often quite a bit smaller than the Bach bound for fields of interest. We will therefore use the term ``GRH bound'' to refer to the smaller of the Bach bound and the Belabas bound for a particular field $K$. As long as $\mathcal P$ contains all the primes of $K$ with norm less than the GRH bound, the rank of the nullspace of the relation matrix is an upper bound for $\dimF Cl(K)[2]$ under GRH.

It is easy to see that if $K$ is a cubic field, then the same result holds if $\mathcal P$ only contains all degree one primes of norm less than the GRH bound. If $\mathfrak{p}$ is a degree 3 prime, then $\mathfrak{p}$ is automatically principal and need not be included in $\mathcal{P}$.  If $\mathfrak{p}$ is a degree two prime of $\cO_K$ lying above a rational prime $p$, then there is a degree one prime $\mathfrak{p}^\prime$ such that $\mathfrak{p}\mathfrak{p}^\prime = (p)$. Since the ideal classes $[\mathfrak{p}]$ and $[\mathfrak{p}^\prime]$ are inverses of each other in $Cl(K)$, it suffices to include only $\mathfrak{p}^\prime$ in $\mathcal P$.

\begin{remark}
Both the Bach and Belabas bounds require GRH to hold for the zeta functions of all unramified abelian extensions of $K$. However, since we are only concerned with $Cl(K)[2]$, it suffices to assume GRH for the zeta functions of all unramified quadratic extensions of $K$.
\end{remark}
%This holds for the following reasons: We can exclude degree three primes, which must be principal. Moreover, since $K/\Q$ is cubic, if $\mathfrak{p}$ is a non-principal degree two prime of $\cO_K$ lying above a rational prime $p$, then there is a degree one prime $\mathfrak{p}^\prime$ such that $\mathfrak{p}\mathfrak{p}^\prime = (p)$. Since the ideal classes $[\mathfrak{p}]$ and $[\mathfrak{p}^\prime]$ are the same in $Cl(K)/Cl(K)^2$, it suffices to include only $\mathfrak{p}^\prime$ in $\mathcal P$.

\subsection{Provable Lower Bounds}\label{subsec::provablelowerbounds}

Having chosen a suitably large factor base $\mathcal{P}$ as described in Section \ref{subsec:fbbounds}, the presentation in Section \ref{subsec::presentation} can be used to get an upper bound on $\dimF Cl(K)[2]$ subject to GRH. We now describe a method to use the presentation matrix $M$ described in Section \ref{subsec::presentation} to produce an unconditional lower bound for $\dimF Cl(K)[2]$. In the event that this unconditional lower bound matches the conditional upper bound, we obtain an exact value for $\dimF Cl(K)[2]$ subject to GRH.
 
Following Section 5 of \cite{C}, we define the 2-Selmer group of the field $K$, denoted $\Sel_2(K)$ by $$\Sel_2(K) = \{ \beta \in K^\times/(K^\times)^2 : \ord_\mathfrak{p} \beta \equiv 0 \pmod{2} \text{ for all primes } \mathfrak{p} \text{ of } K\}.$$ It is easy to see that left nullvectors of the presentation matrix $M$ in Section \ref{subsec::presentation} yield representatives of elements in $\Sel_2(K)$.

The Selmer group $\Sel_2(K)$ sits in the exact sequence $$0 \too \cO_K^\times/(\cO_K^\times)^2 \too \Sel_2(K) \too Cl(K)[2] \too 0.$$ The left hand term $\cO_K^\times/(\cO_K^\times)^2$ is known to be isomorphic to $(\Z/2\Z)^{r_1 + r_2}$, and as a result, we have $\dimF Cl(K)[2] = \dimF \Sel_2(K) - (r_1 + r_2).$ Therefore, by producing $r$ independent elements of $\Sel_2(K)$, we are able to prove that $\dimF Cl(K)[2] \ge r - (r_1+ r_2)$.
% Therefore, if the presentation in Section \ref{subsec::presentation} shows that $\dimF Cl(K)[2] \le r$, we are able to prove that $\dimF Cl(K)[2] = r$ by exhibiting $r + r_1 + r_2$ independent elements on $\Sel_2(K)$.

As noted above, left nullvectors of the presentation matrix $M$ give elements of $\Sel_2(K)$. Supposing that we have found $r$ such elements $\beta_1$, $\beta_2$, \ldots, $\beta_r$, we need to show that they represent independent elements of $K^\times/(K^\times)^2$. This can be accomplished by finding prime ideals $\mathfrak{p}_1, \mathfrak{p}_2, \ldots, \mathfrak{p}_r \not \in \mathcal{P}$ such that the matrix $\left(\chi_{\mathfrak{p}_i}(\beta_j)\right)$ has rank $r$, where $\chi_{\mathfrak{p}_i}$ is the additive Legendre character on $\cO_K/\mathfrak{p}_i$.

\subsection{Constructing Relations}

While the smooth relations $(\beta)$ described in Section \ref{subsec::presentation} need not be constructed in any particular way, there is a computationally efficient method for constructing them based on the number field sieve factoring algorithm \cite{NFS}. We now describe the basic idea behind sieving.

Letting $f(x)$ be any defining polynomial for $K$ and $\alpha$ be a root of $f(x)$. Suppose that $\mathfrak{p}$ is a prime ideal of $\cO_K$ given by a rational prime $p$ and a root $r$ of $f(x) \pmod p$. We can then see that $\mathfrak{p}$ divides $a + b\alpha$ if $r \equiv -ab^{-1} \pmod p$. Therefore, we may identify all $a + b\alpha$ in a large range $-A \le a \le A$ and $1 \le b \le B$ such that $\mathfrak{p}$ divides $a + b\alpha$.

Doing this for all of the primes $\mathfrak{p} \in \mathcal{P}$, we may identify all $a + b\alpha$ with $-A \le a \le A$ and $1 \le b \le B$ such that $a + b\alpha$ is divisible by many different primes $\mathfrak{p}$ in $\mathcal{P}$ and therefore more likely to factor completely in terms of primes in $\mathcal{P}$.  After identifying many candidate $a + b\alpha$, we may apply trial division (or any other factoring algorithm) to discover which $(a + b\alpha)$ factor entirely in $\mathcal{P}$. 

\section{Choosing Parameters}\label{sec:choosing parameters}

Constructing relations requires choosing three parameters: the polynomial $f(x)$ defining $K$, a factor base bound $\mathcal{B}$, and a sieve region $[-A,A] \times [1,B]$. We now describe how to choose these parameters with a focus on the field $K_{28}$.

\subsection{Choosing the Polynomial}\label{subsec::polychoice}

If $K$ is a cubic field, then we can always find a cubic polynomial $f(x)$ defining $K$ such that the discriminant $\Delta(f)$ of $f(x)$ is equal to the discriminant $\Delta_{\cO_K}$ of $\cO_K$. This polynomial $f(x)$ is unique up to the action of $\mathrm{GL}_2(\mathbb{Z})$. By applying Julia reduction to $f(x)$ (see \cite[Algorithm 1]{Cre}), we can obtain what is in some sense the smallest polynomial defining $K$.

For the field $K_{28}$, this reduced polynomial is given by
\begin{multline}\label{Kpoly}
f(x) = 64023127168000x^3 + 10309553525987840512490787747x^2 \\ - 3858878002265332645698861066081585182608x \\ - 
    69043295714402138353376748510210837676894689434302674.
\end{multline}

\subsection{Choosing the Factor Base Bound}

Section \ref{subsec:fbbounds} addresses how small the factor base bound $\mathcal{B}$ may be. However, choosing the smallest possible $\mathcal{B}$ makes it less likely that an element $a + b\alpha$ of a given size will be smooth. While choosing a larger bound $\mathcal{B}$ will make it easier to find relations, it will make follow-on linear algebra work harder since the size of the matrix $M$ will increase. Our primary goal in choosing a factor base bound was that the resulting matrix could be processed in \textbf{magma}. For $K_{28}$, a natural touchstone was the Bach bound of $1,202,639$, which gave us a factor base $\mathcal{P}$ containing $93,121$ primes.

%\subsection{Smoothness Probabilities}

\subsection{Choosing a Sieve Region}\label{subsec::chooseregion}

In order to choose a sieve region $\mathcal{A} = [-A,A] \times [1,B]$, we need to consider two things - how large our sieve region should be (that is, $2 \cdot A \cdot B$) and how skew that region should be (that is, $A/B$).

\subsubsection{Skewness}\label{subsubsec:skewness}

%\subsubsection{Approximating Norm Sizes}
Let $F(X,Y)$ be the homogenization of $f(x)$.  The norm $\mathbf{Norm} (a + b\alpha)$ is given by $$\mathbf{Norm} (a + b\alpha) = N_{K/\Q} (a + b\alpha) = \dfrac{F(a,-b)}{c_3(f)},$$ where $c_3(f)$ is the leading coefficient of $f(x)$. Assuming that all primes dividing $c_3(f)$ are in $\mathcal{P}$ and that $\mathcal{P}$ contains all of the primes dividing $(\alpha)$, then $(a+ b\alpha)$ factors completely in $\mathcal{P}$ if and only if $F(a,-b)$ is $\mathcal{B}$-smooth. Therefore, the likelihood that the ideal generated by $a + b\alpha$ for a random $(a,b)$ in $\mathcal{A}$ factors completely in $\mathcal{P}$ is given by the probability that $F(a,-b)$ is $\mathcal{B}$-smooth for a random $(a,b) \in \mathcal{A}$. To first approximation, this probability is determined by the size of $|F(a,-b)|$.

Rather than attempt to understand how the size of $|F(a,-b)|$ is distributed on $\mathcal{A}$, we may simply consider the maximum of $|F(a,-b)|$ on the boundary of $\mathcal{A}$. To do so, we consider the individual terms of $F(X,Y)$, which attain their maximum (in absolute value) of $|c_i|A^iB^{3-i}$ at the point $(A,B)$, where $c_i$ is the coefficient of $x^i$ in $f(x)$. An ideal skewness would have $A/B$ chosen so that each $|c_i|A^iB^{3-i}$ was of roughly equal size.

In our case, the polynomial $F(X,Y)$ does not admit a skewness such that each $|c_i|A^iB^{3-i}$ is of roughly equal size. We may however choose a skewness so that the largest two values of $|c_i|A^iB^{3-i}$ are roughly the same. For our polynomial $F(X,Y)$, this suggests a skewness of $s  = 2^{41.25}$.

If $\mathcal{A}$ has skewness  $s = 2^{41.25}$, then $\mathcal{A}$ must have an area of at least $S = 42.25$ bits in order to have integral points with $b \ne 0$. We will therefore assume that $S$ is at least $42.25$ bits. In this case, we find that the two largest values of $|c_i|A^iB^{3-i}$ are $|c_2|A^2B$ and $|c_0|B^3$, which both have size $175.5 + \frac{3}{2}(S - 42.25)$ bits. As the values of  $|c_1|AB^2$ and $|c_3|A^3$ are substantially smaller, we may approximate the maximum of $|F(X,-Y)|$ on $\mathcal{A}$ as $|c_2|A^2B + |c_0|B^3$ which is roughly $176.5 + \frac{3}{2}(S - 42.25)$ bits in size.

\subsubsection{Smoothness Probabilities}

We now must consider how large of a sieve region to use. In order to produce enough relations, we need %$$\frac{1}{\zeta(2)}2\cdot A \cdot B \cdot Prob\left ((a + b\alpha) \text{ factors completely in } \mathcal{P}  | (a,b) \in \mathcal{A} \right) \ge |\mathcal{P}|.$$ % where $S = 2\cdot A\cdot B$ is the size of the sieve region $\mathcal{A}$.
$$\frac{1}{\zeta(2)}2\cdot A \cdot B \cdot \prob\left ( F(a,-b) \text{ is $\mathcal{B}$-smooth } \mid (a,b) \in \mathcal{A} \right) \ge |\mathcal{P}|.$$
We therefore need to estimate the probability that $F(a,-b)$ is $\mathcal{B}$-smooth when $(a,b)$ is chosen randomly from $\mathcal{A}$.

Let $\rho(u)$ denote Dickman's rho function. If $n$ is a random number of size $C$, then standard results tell us that the probability that $n$ is $\mathcal{B}$-smooth can be approximated by $\rho\left (\frac{\log C}{\log \mathcal{B}}\right)$ as long as $\mathcal{B} \ge (\log C)^{2+\epsilon}$ (assuming GRH) \cite{H}. However, if $n = F(a,-b)$ is a random value of $F(X,Y)$, then the probability that $n$ is a $\mathcal{B}$-smooth is affected by a parameter known as $\alpha = \alpha(F)$ which takes into account the modular root properties of $F(X,Y)$ \cite{Murphy}. Assuming that $n$ has size $C$, the probability that $n$ is $\mathcal{B}$-smooth is equal to the probability that a random number of size $C\alpha$ is smooth. We may therefore approximate the probability that $n$ is smooth by $\rho\left (\frac{\log C + \log \alpha}{\log \mathcal{B}}\right)$. For our polynomial $F(X,Y)$, \textbf{magma} tells us that $\alpha \approx -2^{1.9}$.

\subsubsection{Relation Estimates}\label{subsec::relest}

The size of $|F(a,-b)|$ for $(a,b) \in \mathcal{A}$ may vary considerably. One very crude estimate for a representative value of $|F(a,-b)|$ would be the maximum $|F(X,Y)|$ on the boundary of $\mathcal{A}$, which we calculated to be $176.5 + \frac{3}{2}(S - 42.25)$ bits at the end of Section \ref{subsubsec:skewness}. A somewhat less crude estimate would be given by decomposing $\mathcal{A}$ into shells and taking the maximum of $|F(X,Y)|$ on each shell to be representative of the values of $|F(X,Y)|$ on that shell.

Assuming that $S = 42.25 + 0.25\cdot k$, then using shells of radius $0.25$, we estimate that the number of relations for a sieve region of size $S$ is given by $$\frac{1}{\zeta(2)} \sum_{i = 0}^k  
\beta(k) \rho\left(\frac{176.5 + \alpha + \frac{3}{2}(0.25k)}{\log_2 \mathcal{B}} \right) = \frac{1}{\zeta(2)} \sum_{i = 0}^k \beta(k) \rho\left(\frac{174.6 + \frac{3k}{8}}{20.2} \right),$$ where $\beta(k) =  \begin{cases} 2^{42.25} & \textnormal{if $k = 0$} \\ 2^{42.25 + 0.25k} - 2^{42.25 + 0.25(k-1)} & \textnormal{if $k > 0$}\end{cases}.$

Estimates for several values of $S$ are given Table \ref{tab:tau_s}. Since $\mathcal{P}$ consists of $93121$ primes, Table \ref{tab:tau_s} suggests that $\mathcal{A}$ should have size somewhere between $2^{46}$ and $2^{46.5}$.

\begin{table}[h]
\renewcommand{\arraystretch}{1.25}
\begin{tabular}{|c|c|c|}
\hline
$S $ & Estimated number of relations \\
\hline
45 & 51,394\\
\hline
45.5 & 65,320\\
\hline
46 &82,602\\
\hline
46.5 &104,046\\
\hline
47 & 130,648\\
\hline
47.5 &163,641\\
\hline
48 & 204,554\\
\hline
48.5 & 255,278\\
\hline
\end{tabular}
\vspace{0.2in}
\caption{Estimated numbers of relations for different sieve regions}
\label{tab:tau_s}
\end{table}

\subsection{The Computation for $K_{28}$}

%Our expectation was that we would need a sieve region of size between $2^{48}$ and $2^{49}$. However, since this estimate was based on numerous approximations, we decided to start out small and increase the size of the sieve region until we had enough relations.

We chose to sieve the region $[-2^{43.75},2^{43.75}] \times [1,5]$, which has size roughly $2^{47}$. We found $133,637$ relations, which is in line with the prediction in Table~\ref{tab:tau_s}.  We were able to augment these with $15,518$ relations coming from rational primes $p$ that split completely in $\cO_K$ (that is, relations of the form $p + 0\cdot \alpha$).

Unsurprisingly, there were a small number of primes in $\mathcal{P}$ that did not appear in any relation. However, all of these primes had norm greater than the Belabas bound of $200,439$, so we were able to safely remove them from $\mathcal{P}$.

However, when we reduced the entries of the relation matrix into $\mathbb{F}_2$, we discovered that the columns for the degree one primes $\mathfrak{p}_7$ and $\mathfrak{p}_{13}$ above $7$ and $13$ were identically zero. Further inspection revealed that since $\ord_{\mathfrak{p}_7} \alpha = -2$, we had $\ord_{\mathfrak{p}_7} a + b \alpha = -2$ for all relations $a + b\alpha$. The same held true for $\mathfrak{p}_{13}$. We were able to remedy this by sieving for a small number of relations of the form $a + 7\alpha$ and $a+ 13\alpha$ and including these. (A degree one prime above $17$ would have exhibited the same phenomenon had we not included the rational relation $17 + 0\cdot \alpha$.)

Upon computing the right nullspace of our relation matrix, we discovered that there were a small number of low-weight vectors that seemed spurious. These corresponded to primes (all above the Belabas bound) that did not appear in enough relations. By removing the relations incident on these primes, we were able to remove the primes from our factor base. A second nullspace computation showed that the nullity of the modified relation matrix was in fact 20, proving  Part (\ref{grh_upper}) of Theorem \ref{backup}: if GRH holds, then the 2-rank of $Cl(K_{28})$ is exactly $20$.

The dominant portion of the computation was the sieving step. Since the NFS functionality built into \textbf{magma} did not support our chosen sieve region, we wrote speciality C code to handle the sieving. The sieve portion of the computation took roughly 14.5 core days on a cluster composed of Intel 2.6 GHz processors. The linear algebra portion of the computation was completed on a single instance of \textbf{magma} running on a desktop. This portion of the computation took roughly 15 minutes and used under 16 GB of memory. 
Now we prove the remaining part of Theorem \ref{mainthm}.
\begin{proof}[Proof of Theorem \ref{mainthm} - part (\ref{algebraic})]
%By Proposition \ref{bk_bound} we have 
%\begin{equation}
%\dimF \textnormal{Sel}_2(E_{28}/\Q) \leq \dimF Cl(K_{28})[2] + 2 + 6.
%\end{equation}
%Assuming GRH for the Dedekind $\zeta$-function of $K_{28}$, part (\ref{grh_upper}) of Theorem \ref{backup} gives $\dimF Cl(K_{28})[2] \leq 20$. It follows that $\dimF \textnormal{Sel}_2(E_{28}/\Q) = 28$. On the other hand, $\dimF \textnormal{Sel}_2(E/\Q) \geq \textnormal{rank}\, E_{28}(\Q) \geq 28$. Hence GRH implies that rank
%
We have shown that if GRH holds, then $g(E_{28}) = \dimF Cl(K_{28})[2]$ is at most $20$. Combining Proposition \ref{bk_bound} with Elkies's lower bound on the rank of $E_{28}(\Q)$ and part (\ref{grh_upper}) of Theorem \ref{backup}, we get 
$$ 28 \leq \textnormal{rank}\, E_{28}(\Q) \leq  \dimF \textnormal{Sel}_2(E/\Q) \leq g(E_{28}) + u(E_{28}) + n(E_{28}) = 20 + 6 + 2 = 28.$$
Hence GRH implies the rank of $E_{28}(\Q)$ is exactly $28$. Since $E_{28}(\Q)_{\textnormal{tors}}$ is trivial, we conclude that $E_{28}(\Q) \simeq \Z^{28}$ subject to GRH.
\end{proof}

\subsection{Computation for $K_{27}$}
%\subsection{Proof of Theorem \ref{thm:rank27thm}}

We used the same considerations described in Sections \ref{subsec::polychoice} - \ref{subsec::chooseregion} to choose parameters for $K_{27}$.

The appropriately minimized and reduced polynomial for $K_{27}$ is given by
\begin{multline}\label{K27poly}
f(x) = 15560036076469248x^3 + 51468441407469319836143473x^2 \\ - 497312227802505407769400165687028x \\+ 556884612253557846953628131195272740623601.
\end{multline}
The relative size of the coefficients of $f(x)$ suggest that we should use a skewness of $s = 2^{26.625}$.

To choose the factor base bound $\mathcal{B}$, we first considered the Belabas bound $\mathcal{B}_B$ which \textbf{magma} says is equal to $143,829$. However, a back of the envelope calculation showed that finding relations with this bound would be particularly difficult, and that choosing $\mathcal{B} = 4\cdot \mathcal{B}_B$ would be more effective. This resulted in a factor base with $47,063$ primes.
 
Using the method described in Section \ref{subsec::relest} for estimating relations, we settled on the sieve region $[-2^{34},2^{34}] \times [1,166]$ which has size roughly $2^{42.375}$. Sieving this region yielded $54,597$ relations which we augmented with an additional $7,817$ relations coming from rational primes.

As was the case for $K_{28}$, the initial right nullspace computation produced a small number of low-weight vectors. After removing the corresponding columns and the rows incident on them (as well as the empty columns), we were left with a $62,370 \times 46,513$ matrix $M$. As all of the columns removed corresponded to primes above the Belabas bound, this did not affect the integrity of our computation. A computation in \textbf{magma} then showed that the right nullspace of $M$ had dimension $22$, and as a result $Cl(K_{27}) \le 22$ subject to GRH.

Taking the submatrix of $M$ consisting of the first $45,325$ rows, we obtained a matrix with a 32-dimensional left nullspace. Using the technique described at the end of Section \ref{subsec::provablelowerbounds}, we were able to show that this nullspace contained $24$ independent elements of $K^\times/(K^\times)^2$. As $K_{27}$ has one real and one complex place, this proves that $\dimF Cl(K_{27})[2] \ge 22$ unconditionally. Combined with the upper bound computed above, we therefore get that  $Cl(K_{27}) \le 22$ subject to GRH. This proves Theorem \ref{thm:rank27classgrpthm}.
%\item \label{lower27} 	\item \label{grh_upper27} 

Now we prove the remaining part of Theorem \ref{thm:rank27thm}.
\begin{proof}[Proof of Theorem \ref{thm:rank27thm} - part (\ref{algebraic27})]
We begin by appealing to Proposition \ref{bk_bound} which shows that $\dimF \Sel_2(E_{27}/\Q) \le 28$ subject to GRH. However, the root number $\epsilon(E_{27})$ is equal to $-1$, and therefore by Theorem 1.4 in \cite{DD}, we know that $\dimF \Sel_2(E_{27}/\Q)$ is odd. Combined with the fact that $E_{27}$ is known to have at least 27 independent points, this shows that subject to GRH, $\dimF \Sel_2(E_{27}/\Q)$ and therefore the rank of $E_{27}$ are equal to 27.
\end{proof}
%%%\begin{proof}[Proof of Theorem \ref{backup} - part (ii)]
%%%We compute $|S_K|$ by examining how these primes in $S$ factor in $\cO_K$. The primes $2$, $17$, $19$, $48463$, $\pp_{10}$, $\pp_{44}$, and $\pp_{77}$ all factor as a degree one prime times the square of a ramified prime in $\cO_K$, the primes $3$ and $5$ split completely in $\cO_K$, and the primes $7$, $11$, and $13$ each factor as a degree one prime times a degree two prime. Altogether, this gives $|S_K| = 26$.
%%%
%%%Because $K$ is totally real, we get that $\dimF \cO_K^\times/(\cO_K^\times)^2 = 29$ and by combining (\ref{selseq}) with the result of our class group computation, we get that $\dimF \Sel_2^{S_K}(K) \le  13 + 29 = 42$. Because $|S| = 12$, Corollary \ref{h1size} then tells us that $\dimF H^1(\Q, E[2]; S) \le 42 - 13 = 29.$
%%%\end{proof}
%%%
%%%\begin{proof}[Proof of Theorem \ref{mainthm} - Mordell-Weil rank]
%%%
%%%Because $\Sel_2(E/\Q) \subset H^1(\Q, E[2];S)$, it follows that $\dimF \Sel_2(E/\Q)$ is either 28 or 29. However, it is known that the Elkies curve has root number $+1$ and therefore by Theorem 1.4 in \cite{DD}, the rank of $E$ must be even and the result follows.
%%%\end{proof}

\appendix

\section{Proof of Theorem \ref{other_curves}} \label{proof_other_curves}

%To prove Theorem \ref{other_curves}, we will need the following result of Brumer and Kramer \cite[Proposition 7.1]{BK}: %(In the spirit of Mazur's bound \cite[Proposition 9.9]{RPAVVTNF}).
%\begin{proposition} Let $E$ be an elliptic curve over $\Q$ with $\dimF E(\Q)[2] = 0$. Let $\Phi_a$ be the set of places of $K$. [[Jamie -- Need to fill out the result for Brumer and Kramer somewhere.
%\end{proposition} 

Theorem \ref{other_curves} is proved in a manner similar to part (\ref{algebraic}) of Theorem \ref{mainthm}, where bounding the $2$-rank of the class group of a cubic field was essential. We found that the existing sieving machinery in \textbf{magma} was sufficient to determine the $2$-Selmer ranks $\dimF \textnormal{Sel}_2(E_r/\Q)$ subject to GRH for $r \in \{20,21,22,23,24\}$. As was the case for $K_{28}$, we use Julia reduction of binary cubic forms to find small defining polynomials for the cubic subfields $K_r$ of $\Q(E_r[2])$. These reduced defining polynomials are listed in Table \ref{tab:def_polys}.

\begin{table}[h]
\def\arraystretch{1.25}
\begin{tabular}{|c|c|}
\hline
$r$ & $f(x)$  \\
\hline
$20$ & $\begin{aligned}13370149617006967x^3 &+36323790822192190x^2\\ 
									&+97698281640159313x - 102297590541619200 \end{aligned}$\\
\hline
$21$ &  $\begin{aligned}274654350297600x^3 &-1624392373464273559x^2\\  
									&-9371598016369119418702x + 6162113868013558026402675 \end{aligned}$\\ 
\hline
$22$ & $\begin{aligned}6142990220640x^3 &+204976117420509373x^2\\ 
								&-169253519238896688671x - 628110960931737938720390\end{aligned}$\\
\hline
$23$ & $\begin{aligned}59865403640328000x^3 &+30357716218004835541x^2\\  
									&-14206611767334834785x + 3031944233345318784207\end{aligned}$ \\
\hline
$24$ & $\begin{aligned} 70256883874320x^3 &+75608696284455934477x^2 \\ 
									&-214624301781108927172690x\\
									& -25666999271392112689637803778\end{aligned}$ \\
\hline
   
\end{tabular}
\vspace{0.2in}
\caption{Defining Polynomials for $K_r$ }
\label{tab:def_polys}
\end{table}
For each of the fields $K_r$, we choose a factor base of degree one primes with norm below the Belabas bound described in Section \ref{subsec:fbbounds}. %given by the Magma command \texttt{GRHBound}. This bound relies on work of Belabas, Diaz y Diaz, and Friedman \cite{BDyDF} to reduce the size of the factor base needed to get results valid under GRH for the Dedekind $\zeta$-function of $K_r$. This bound is often substantially smaller than the Bach bound, resulting in an easier linear algebra problem for the final step of the computation. Both bounds are given in the following table.
These bounds along with the Bach bound for each $K_r$ are given in Table \ref{tab:prime_bounds}.

\begin{table}[h]
\renewcommand{\arraystretch}{1.25}
\begin{tabular}{|c|c|c|}
\hline
$r $ & \textrm{Bach Bound}  & \textrm{Belabas Bound} \\
\hline
$20$ & $295,854$ & $29,585$ \\
\hline
$21$ & $419,613$ & $55,948$ \\
\hline
$22$ & $371,338$ & $37,133$ \\
\hline
$23$ & $412,632$ & $48,140$ \\
\hline
$24$ & $500,045$ & $66,672$ \\
\hline
$27$ & $908,397$ & $143,829$ \\
\hline
$28$ & $1,202,639$ & $200,439$ \\ 
\hline
   
\end{tabular}
\vspace{0.2in}
\caption{Primes Bounds for $K_r$ }
\label{tab:prime_bounds}

\end{table}

The sieving for the class groups of each $K_r$ was completed using the number field sieve machinery implemented in \textbf{magma} under the \texttt{NFSProcess} command. Each sieve problem was sufficiently small that it could be run overnight on a single CPU. The sieve jobs were all sufficiently small that we did not make any attempt to choose optimal (or even particularly good) sieve regions.

Table \ref{tab:ranks} gives the upper bound for $g(E_{r}) = \dim_{\mathbb{F}_2} Cl(K_r)[2]$ for each $K_r$, along with the values of all of the terms appearing in Proposition \ref{bk_bound} and the global root number $\epsilon(E_r)$ for each of the curves $E_r$ in Theorem \ref{other_curves}.

\begin{table}[h]
\begin{tabular}{|c|c|c|c|c|c|c|}
\hline
$r$ & $g(E_{r}) \leq^*$ & $u(E_r)$ & $n(E_r)$ & $\epsilon(E_r)$ & $\dimF \textnormal{Sel}_2(E_r/\Q) \leq^*$
\\
\hline
$20$  & $15$ & $1$ & $5$ & $+1$ & $20$ \\
\hline
$21$ & $14$ & $2$ & $5$ & $-1$&  $21$ \\
\hline
$22$  & $16$ & $2$ & $4$ &$+1$& $22$ \\
\hline
$23$  & $15$ & $1$ & $8$ &$-1$& $23$ \\
\hline
$24$  & $16$ & $2$ & $7$ & $+1$& $24$ \\
\hline
$27$  & $22$ & $1$  & $5$  & $-1$ & $27$ \\
\hline
$28$ & $20$ & $2$ & $6$ & $+1$& $28$ \\
\hline
\end{tabular}
\vspace{0.2in}
\caption{Calculation of $\dimF \textnormal{Sel}_2(E_r/\Q)$. Bounds denoted with $\leq^*$ depend on GRH.}
\label{tab:ranks}
\end{table}
\begin{proof}[Proof of Theorem \ref{other_curves}]
The second column in Table \ref{tab:ranks} gives upper bounds on the $2$-ranks of the class group $Cl(K_r)$ conditional on GRH. The upper bounds on $\dimF \textnormal{Sel}_2(E_r/\Q)$ are obtained by combining Proposition \ref{bk_bound} with a result of Dokchitser and Dokchitser \cite[Theorem 1.4]{DD}, which ensures that $\epsilon(E) = (-1)^{s(E)}$ in terms of the difference $s(E) = \dimF \textnormal{Sel}_2(E/\Q) - \dimF E(\Q)[2]$. In each case, we see that GRH implies the $2$-Selmer rank $\dimF \textnormal{Sel}_2(E_r/\Q)$ is at most $r$. Since each curve $E_r$ is known to have trivial torsion subgroup and Mordell-Weil rank at least $r$, we conclude that $E_r(\Q) \simeq \Z^r$ subject to GRH.
\end{proof}

\section{Generators for $E_{27}(\Q)$}\label{app::gensE27}

The following is a list of $27$ independent points on the elliptic curve $E_{27}$ given by equation (\ref{elkies27}) above:
%
%\vspace{-0.5in}
%
%\begin{multline*}
%\resizebox{0.9\hsize}{!}{$y^2 + xy = x^3  -55671146865244401916117773020296610079754015500970x$}\\
% \resizebox{1\hsize}{!}{$+  161981895322788558220906653027519611838007321625214218991719656790551905956$}.
%\end{multline*}
%
\begin{gather*}
\begin{flalign*}
 &(3767967516008165080365044, 2389736302094908158004099904947501190),&  \\  
  &(6870254134405565034404108, 10187524517965617942800545361683736678),&  \\
  &(3887185284020449623939380, 2077020998301366905747533719381033926),&\\
  &(4704247833799635063001076, 2048360739972031724784820678863578246),&\\
  &(4126561570009022393013236, 1587663907962563318996362180056025478),& \\
%\end{flalign*}
%\end{gather*}
%\begin{gather*}
%\begin{flalign*}  
&(4589477829219012602846900, 1774818405716699582839388275297252934),&\\  &(1744288391661626065189796, 8377495495389391047035879698795823126),& \\
  &(375965292932773063399988, 11878746522289663117790823052090948358),&\\
  &(-4430058725939313297140384, 17935065674772418581237173320631279206),&\\
  &(46029381695079838296565796, 308418721198583803941973238472690797126),&\\
  &(5015368619774521542769364, 2987769291318668561101046595511063430),&\\
  &(55141979583089031946559900, 405905110011451276640435700460385551166),&\\
  &(2703830808220294466353748, 5587793124284970779400186615144247334),&\\
  &(3412724629872318338319668, 3426156011058008602456511184805561094),&\\
  &(272723117214107051072886140, 4502171870151657762741942725666306991014),&\\
  &(4732850534022088572670964, 2124602225002897987491873188898646406),&\\
  &(19225480790209113087907256, 78725996092378368618479740248297817478),&\\
  &(5213267756598937117846508, 3666105463387143768198032469471386414),&\\
  &(-4503215618194252049902522, 17926532987110694852440283715314002874),&\\
  &(10358928712485769814651816, 26398450763063898266637186797421380678),&\\
  &(6560446866541184312028656, 8894515448962144734398280820434671978),& \\ &(4667249764662401626929236, 1954092716090144351072720616414325286),&
\end{flalign*}
\end{gather*}
\begin{gather*}
\begin{flalign*}  
  &(3131745787384349113625300, 4283649283716227803355987840842617734),& \\
  &(243907731994687263474127628, 3807478665185691587984635270031859346574),& \\
  &(110171466072672245507182388, 1153803508275547153736593941741941166854),&\\
  &(2631452133741740392491152, 5805818938673165314161211507146370634),&\\
  &(2398961346477899287733092916, 117498623151243646059583140149253976390406). &
\end{flalign*}
\end{gather*}

\end{document}